\documentclass[reqno,a4paper,12pt]{amsart}
\usepackage[english]{babel}
\usepackage{url}
\usepackage
{hyperref}
\usepackage{comment}
\usepackage{tcolorbox}

\usepackage{cancel}

\newcommand{\crosses}[1]{%
	\ifcase#1\relax
	\or
	\rslash\or
	\rslash\mskip-5.5mu\rslash\or
	\rslash\mskip-5.5mu\rslash\mskip-5.5mu\rslash%
	\fi
}
\newcommand{\rslash}{\raisebox{.15ex}{/}}
\usepackage{xcolor,color}
\usepackage{geometry}
\usepackage[all, cmtip]{xy}
\usepackage{enumitem}
\usepackage{booktabs}
\usepackage{slashed}
\usepackage{bbm}
\usepackage{cancel}

\usepackage{cleveref}
\usepackage{tikz}
\usepackage{tikz-cd}
\tikzcdset{arrow style=math font, diagrams={>=stealth}}

\usepackage{xargs}  
\usepackage{soul}

\usepackage{amsmath}
\usepackage{amssymb,graphicx}
\usepackage{amsthm}
\usepackage{latexsym}
\usepackage{amsfonts}
\usepackage{bbm}
\usepackage{cases}

\calclayout

\numberwithin{equation}{section}

\theoremstyle{plain}
\newtheorem{lemma}{Lemma}[section]
\newtheorem{proposition}[lemma]{Proposition}
\newtheorem{proposition/definition}[lemma]{Proposition/Definition}
\newtheorem{theorem}[lemma]{Theorem}

\newtheorem*{theorem*}{Theorem}

\theoremstyle{definition}
\newtheorem{definition}[lemma]{Definition}
\newtheorem{propdef}[lemma]{Proposition / Definition}
\newtheorem{principle}[lemma]{Principle}
\newtheorem{remark}[lemma]{Remark}
\newtheorem{example}[lemma]{Example}

\DeclareRobustCommand{\SkipTocEntry}[5]{}

\title{The Symplectic-to-Contact Dictionary}

\makeatletter
\def\author@andify{%
	\nxandlist {\unskip ,\penalty-1 \space\ignorespaces}%
	{\unskip {} \@@and~}%
	{\unskip \penalty-2 \space \@@and~}%
}
\makeatother

\author[F.~Pugliese]{Fabrizio Pugliese}
\address{DipMat, Universit\`a degli Studi di Salerno, via Giovanni Paolo II n${}^{\circ}$123, 84084 Fisciano (SA), Italy.}
\email{\href{mailto:fpugliese@unisa.it}{fpugliese@unisa.it}}

\author[G.~Sparano]{Giovanni Sparano}
\address{DipMat, Universit\`a degli Studi di Salerno, via Giovanni Paolo II n${}^{\circ}$123, 84084 Fisciano (SA), Italy.}
\email{\href{mailto:sparano@unisa.it}{sparano@unisa.it}}

\author[L.~Vitagliano]{Luca Vitagliano}
\address{DipMat, Universit\`a degli Studi di Salerno, via Giovanni Paolo II n${}^{\circ}$123, 84084 Fisciano (SA), Italy.}
\email{\href{mailto:lvitagliano@unisa.it}{lvitagliano@unisa.it}}

\keywords{Contact geometry, symplectic geometry, almost contact geometry, $G$-structure}

\subjclass[2020]{53D10 (Primary), 
53D05, 
53D15, 
53D17, 
53C10, 
53C15
}

\begin{document}

\begin{abstract}
Contact Geometry is an odd dimensional analogue of Symplectic Geometry. This vague idea can actually be formalized in a rather precise way by means of a Symplectic-to-Contact Dictionary. The aim of this review paper is discussing the basic entries in this dictionary. Surprisingly, the dictionary can also be applied to apparently far away situations like complex and $G$-structures, to get old and new interesting geometries.
\end{abstract}

\maketitle

\tableofcontents

\section{Introduction}

Contact Geometry originated from the work of S. Lie on the Geometry of PDEs in the late 19th century (and later the work of Cartan and collaborators in the first half of the 20th century). Contact manifolds are the natural setting for a coordinate free theory of first order PDEs in 1 dependent variable (see, e.g., \cite[Section 3]{Vit2013}). Nowadays contact structures are mainly studied from two points of view:

\begin{itemize}
    \item Contact Metric Geometry: where contact (and related) structures are studied in connection with metric structures (fo instance Sasakian manifolds), see, e.g., \cite{Bla2010, Boy2008}, and
    \item Contact Topology: where global topological aspects of contact manifolds are analyzed (somewhat in parallel with Symplectic Topology), see, e.g., \cite{Gei2008}.
\end{itemize}

There are also works in Poisson Geometry studying contact structures in connection with various brackets, Lie algebroids, Lie groupoids \cite{Cra2007, Cra2013, Bla2020, Ang2024} and Geometric Mechanics (see \cite{Bra2017} for one of the first works in this direction, see also \cite{Gra2022} and references therein). This third line of research originated from the works of Kirillov \cite{Kir1976} and Lichnerowicz \cite{Lic1978}, and later Marle \cite{Mar1991} (and collaborators), on Jacobi brackets.

Contact manifolds are necessarily odd dimensional and Contact Geometry can be seen as an odd dimensional analogue of Symplectic Geometry in many respects. In fact, there is a motto by Arnol'd \cite{Arn1989}:

\begin{equation}\label{Arnol'd}
\begin{array}{c}
\text{\emph{Every theorem in symplectic geometry}} \\
\text{\emph{may be formulated as a contact geometry theorem}} \\
\text{\emph{and an assertion in contact geometry}} \\
\text{\emph{can be translated in the language of symplectic geometry.}}
\end{array}
\end{equation}

This is not just a motto. It turns out that there is a precise dictionary that allows to translate natural constructions and statements from Symplectic Geometry to Contact Geometry in a rather straightforward way. The aim of this note is reviewing this Symplectic-to-Contact Dictionary presenting its main entries in a concise way. Notice that parts of this Dictionary have already appeared in different papers in the literature, while a systematic presentation appears here for the first time. 

Somewhat surprisingly, the Dictionary can be applied to situations in Differential Geometry apparently far from Symplectic Geometry to get old and new interesting geometric structures. For instance, one can apply the dictionary to Complex Geometry \cite[Appendix A]{Sch2020} to find an odd dimensional analogue known in the literature as Normal Almost Contact Geometry \cite{Bla2010}. The dictionary can be also applied to Multisymplectic Geometry \cite{Vit2015}, Dirac Geometry \cite{Vit2018}, Generalized Complex Geometry \cite{Vit2016b}, and $G$-structures \cite{Tor2020}, among others.

The paper is organized as follows: in Section \ref{sec:contact} we recall the definition of a \emph{contact structure}. We never assume the existence of a global contact form, and this is an important feature that substantiates the Dictionary from the conceptual point of view. In Section \ref{sec:jacobi} we recall the Lie bracket on sections of the normal bundle $L$ to a contact structure. This bracket is the contact analogue of the Poisson bracket in Symplectic Geometry and it is equivalent to a tensor-like object analogous to the Poisson tensor which can easily be described in terms of the gauge algebroid and the first jet bundle of $L$. In Section \ref{sec:atiyah} we present a first version of the Symplectic-to-Contact Dictionary. The main characters here are the gauge algebroid $\mathsf D L$ of a line bundle $L$, and the so-called \emph{Atiyah forms} on $L$, i.e. $L$-valued, Lie algebroid cochains on $\mathsf D L$. Under the Dictionary, the tangent bundle translates to $\mathsf D L$, while differential forms translate to Atiyah forms. The Dictionary actually comes in two versions. In Section \ref{sec:homogeneous} we present the second version: contact structures are equivalent to \emph{homogeneous symplectic structures}. The first version of the Dictionary is more algebraic, while the second one is more geometric, but they are both useful. The last 3 sections provide examples of how to use the Dictionary to translate from specific geometries. In Section \ref{sec:jacobibundles} we apply the Dictionary to Poisson structures and find Jacobi structures. Section \ref{sec:jacobibundles}  also serves as an invitation to Jacobi Geometry. Finally, in Sections \ref{sec:oddcomplex} and \ref{sec:hgs} we apply the Dictionary to complex and $G$-structures, respectively. This latter instance reveals that there might actually be more than one dictionary, depending on the specific geometry we want to translate from. For instance, besides the Symplectic-to-Contact Dictionary, there is also a \emph{Symplectic-to-Cosymplectic Dictionary}, Cosymplectic Geometry being another interesting odd dimensional analogue of Symplectic Geometry.

We omit most of the proofs. The details can either be found in the cited references or easily recovered.

\bigskip

\noindent\textbf{Aknowledgments}. F. P. and L. V. are members of the GNSAGA of INdAM.

\section{\label{sec:contact} Contact Structures}

Let $M$ be a smooth manifold. A \textbf{contact structure} on $M$ is a hyperplane distribution $H \subseteq TM$ with the additional property of being maximally non-integrable. This means that the involutivity condition fails in the worst possible way. Concretely, given a
distribution $H \subseteq TM$, there is a well defined \textbf{curvature $2$-form}:
\begin{equation}
\kappa_H: \wedge^2 H \to TM/H, \quad
(X,Y) \mapsto [X,Y] \mathbin{\mathrm{mod}} H.
\end{equation}
The $2$-form $\kappa_H$ vanishes precisely when $H$ is involutive. When $H$ has codimension 1, then the normal bundle $L := TM/H$ is a line bundle and it makes sense to require $\kappa_H$ to be non-degenerate.

\begin{definition}
A \textbf{contact structure} on $M$ is a hyperplane distribution $H \subseteq TM$ such that $\kappa_H$ is non-degenerate.
\end{definition}

Let $H$ be a contact structure on $M$. From the non-degeneracy condition on $\kappa_H$ we immediately get that $\dim M = 2n + 1$ for some $n$, i.e.~$M$ is odd dimensional. Locally, $H$ is always the kernel of a $1$-form $\eta$, and the non-degeneracy condition on $H$ translates to the condition on $d\eta$ to be non-degenerate on $H$, or equivalently $\eta \wedge d\eta^n \ne 0$. One often assumes that the 1-form $\eta$ exists globally, and call $\eta$ a \textbf{contact 1-form}. This is equivalent to assume that the normal line bundle $L := TM/H$ is globally trivial. Every contact manifold has a 2-sheeted contact covering for which there exists a global contact 1-form. However, we prefer not to make any assumption on $L$ as we believe that this hides some important conceptual aspects. Additionally, there are some interesting contact manifolds for which there is no global contact $1$-form (see Example \ref{ex:proj}).

So let $(M,H)$ be a contact manifold (not necessarily equipped with a global $1$-form) and denote by $L := TM/H$ the normal bundle. We can still present $H$ as the kernel of a form-like object. Namely let
\begin{equation*}
\theta_H: TM \to L , \quad X \mapsto X \mathbin{\mathrm{mod}} H,
\end{equation*}
be the canonical projection. Clearly, $H = \ker \theta_H$. Additionally, $\theta_H$ can be understood as a 1-form with values in the line bundle $L$. Notice that, given any line bundle $L$ and any $L$-valued 1-form $\theta \in \Omega^1(M,L)$, the kernel $\ker \theta$ is a hyperplane distribution precisely when $\theta$ is everywhere non-zero. In Section \ref{sec:atiyah} we will discuss additional conditions on $\theta$ that guarantee that $\ker \theta$ is a contact distribution.

\begin{example}
On $\mathbb{R}^{2n+1}$ with coordinates $(x^i, u, p_i)$, $i = 1, \ldots, n$, consider the 1-form
\[
\eta_{\text{can}} = du - p_i dx^i.
\]
The kernel $H_{\text{can}}$ of $\eta_{\text{can}}$ is spanned by the vector fields
\[
\ldots, D_i := \frac{\partial}{\partial x^i} + p_i \frac{\partial}{\partial u}, \frac{\partial}{\partial p_i}, \ldots, \quad i = 1, \ldots, n,
\]
and it is a contact structure: the \textbf{standard contact structure}.
\end{example}

\begin{example}\label{ex:proj}
Let $N$ be a smooth manifold, with $\dim N = n > 1$, and let 
\[
M = \operatorname{Gr}(N, n-1)
\]
 be the Grassmannian bundle of hyperplanes in $N$. Equivalently, $M = P(T^\ast N)$, the projectivized cotangent bundle of $N$. Denote by $\pi: M \to N$ the projection. The manifold $M$ is canonically equipped with the line bundle $L \to M$ given by
\[
L_\xi := T_{\pi (\xi)} N/\xi, \quad \xi \in M,
\]
and there is a canonical surjection
\[
\theta: TM \to L, \quad 
v \mapsto d\pi(v) \mathbin{\mathrm{mod}} \xi,
\]
where $v \in T_\xi M$. The kernel $H$ of $\theta$ is a contact structure on $M$. When we interpret $M$ as the projectivized cotangent bundle, then $L$ is the dual $O(-1)$ of the tautological bundle on $P(T^\ast M)$. In particular, it is never trivial unless $n=2$.
\end{example}

Let $(M_1, H_1), (M_2, H_2)$ be contact manifolds. A diffeomorphism $\Phi: M_1 \to M_2$ identifying the contact structures is a \textbf{contactomorphism}. It is clear how to define symmetries and infinitesimal symmetries of a contact manifold $(M,H)$. An infinitesimal symmetry of $(M,H)$ is also called a \textbf{contact vector field}. The Lie algebra of contact vector fields of $(M,H)$ is denoted $\mathfrak{X}(M,H)$.

\begin{theorem}[Darboux Lemma]
Every contact manifold of dimension $2n+1$ is locally contactomorphic to $(\mathbb{R}^{2n+1}, H_{can})$.
\end{theorem}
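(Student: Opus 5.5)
The statement is local, so we may work on a small neighbourhood $U$ of a point $x_0\in M$ on which $L=TM/H$ is trivial. On $U$ we have $H=\ker\eta$ for a $1$-form $\eta$ with $\eta\wedge (d\eta)^n\neq 0$. The plan is to prove the following: there are coordinates $(x^i,u,p_i)$ around $x_0$ and a nowhere-vanishing function $f$ with $f\eta = du - p_i dx^i$. This suffices, because rescaling $\eta$ by $f$ does not change its kernel, and the coordinate chart is then a contactomorphism onto an open subset of $(\mathbb{R}^{2n+1},H_{can})$.

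\textbf{Route A: Moser's method.} I would compare $\eta$ with a $1$-form $\eta_1$ obtained by pulling back $\eta_{can}$ along some chart chosen so that $\eta_1$ agrees with $\eta$ at $x_0$. After a linear change of coordinates this is possible, since all contact forms are linearly equivalent at a point. The key steps are as follows.
\begin{enumerate}
\item Linear algebra at $x_0$. We pick a basis in which $\eta_{x_0}=du$ and $d\eta_{x_0}|_{H_{x_0}}$ is the standard symplectic form, so that $\eta$ and $\eta_1$ coincide at $x_0$ together with their differentials there. Only the coincidence at $x_0$ of $\eta$ and $\eta_1$ is really needed.
\item Interpolation. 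Set $\eta_t=(1-t)\eta_0+t\eta_1$. Each $\eta_t$ is contact on a small enough neighbourhood, since the condition is open and $\eta_t$ is constant in $t$ at $x_0$.
\item The homological equation. We look for a time-dependent vector field $X_t\in\Gamma(\ker\eta_t)$ and functions $\mu_t$ with
\[
\dot\eta_t+\mathcal L_{X_t}\eta_t=\mu_t\eta_t .
\]
Since $\eta_t(X_t)=0$, Cartan's formula reduces this to $\dot\eta_t+\iota_{X_t}d\eta_t=\mu_t\eta_t$. Evaluating on the Reeb field $R_t$ of $\eta_t$ gives $\mu_t=\dot\eta_t(R_t)$. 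On $\ker\eta_t$ the equation becomes $\iota_{X_t}d\eta_t=-\dot\eta_t$, which has a unique solution because $d\eta_t$ is nondegenerate there.
\item Integration. Since $X_t$ vanishes at $x_0$, where $\dot\eta_t=0$, its flow $\phi_t$ exists up to $t=1$ on a smaller neighbourhood. This gives $\phi_t^*\eta_t=g_t\eta_0$ with $g_t$ nowhere vanishing, namely $g_t=\exp\int_0^t\phi_s^*\mu_s\,ds$.
\end{enumerate}
Then $\phi_1$ carries $\ker\eta$ to $\ker\eta_1$, which is the standard structure in the chosen chart.

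\textbf{Route B: a direct argument.} Alternatively, one can proceed as follows.
\begin{enumerate}
\item Choose a hypersurface $S$ through $x_0$ transverse to the Reeb field of $\eta$.
\item Apply the relative Darboux theorem to the restriction of $d\eta$ on $S$ to get coordinates $(x^i,p_i)$ on $S$.
\item Flow along the Reeb field, using the flow parameter as $u$.
\end{enumerate}
The trouble with this route is that it directly produces $\eta=du+\lambda$ with $d\lambda$ symplectic. Fixing this up requires Poincaré's lemma to write $\lambda=-p_idx^i+dh$, and then replacing $u$ by $u+h$. This works, but it needs the symplectic Darboux theorem as input.

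\textbf{Main obstacle.} I expect the main difficulty to be step 3 of Route A: one has to solve the homological equation with the correct conformal factor. The non-degeneracy of $d\eta_t$ on $\ker\eta_t$ is exactly what makes it solvable. The other delicate point is shrinking the neighbourhood so that the flow exists up to time $1$. This follows from $X_t(x_0)=0$ by a standard compactness argument in $t\in[0,1]$.
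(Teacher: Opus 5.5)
The paper gives no proof of the Darboux Lemma. It is quoted in Section~2 as standard background, and the authors omit most proofs, so there is no argument of theirs to compare with and your proposal has to stand on its own. It does.

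Route A is the standard Gray--Moser argument, and every step checks out:
\begin{itemize}
\item On $R_t$ the term $\iota_{X_t}d\eta_t$ vanishes because $\iota_{R_t}d\eta_t=0$. So $\mu_t=\dot\eta_t(R_t)$ is forced, and the equation on $\ker\eta_t$ is uniquely solvable.
\item The identity $\frac{d}{dt}\phi_t^\ast\eta_t=(\phi_t^\ast\mu_t)\,\phi_t^\ast\eta_t$ gives your formula for $g_t$.
\item The compactness arguments over $t\in[0,1]$ are fine.
\end{itemize}

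Route B is also correct. You only need the ordinary symplectic Darboux theorem on $S$, not a relative one, together with $\mathcal L_R\eta=0$ to see that $\lambda$ is independent of $u$. It also gives more than Route A: it normalizes the contact form itself, $\eta=du-p_i\,dx^i$ with $f\equiv 1$, not just its kernel.

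Route A is exactly the paper's Dictionary applied to the symplectic Moser argument. After trivializing $L$, your pair $(X_t,\mu_t)$ is a single time-dependent derivation $D_t=X_t-\mu_t\mathbb I$ of $L$. Your homological equation then reads $\dot\Theta_t+\mathcal L_{D_t}\Theta_t=0$ for the Atiyah $1$-forms $\Theta_t=\eta_t\circ\sigma$. The factor $g_t$ is the fibrewise part of the flow of $D_t$. Equivalently, this is an $\mathbb R^\ast$-equivariant Moser argument on the symplectization.

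One side remark needs correcting. It is not true that only the coincidence of $\eta$ and $\eta_1$ at $x_0$ is needed. That is enough for step 4, since it gives $X_t(x_0)=0$. Step 2, however, needs $\eta_t$ to be contact at $x_0$. That condition involves $(d\eta_t)_{x_0}=(1-t)(d\eta)_{x_0}+t(d\eta_1)_{x_0}$ restricted to $H_{x_0}$, and this interpolation can degenerate. For example, on $\mathbb R^3$ the contact forms $du-p\,dx$ and $du+p\,dx$ agree at the origin, yet their average $du$ is nowhere contact. So the justification in step 2 holds only when read as ``$\eta_t$ \emph{and} $d\eta_t$ are independent of $t$ at $x_0$''. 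Your choice of basis in step 1 does guarantee this.
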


As we mentioned in the introduction, contact manifolds can be understood as odd dimensional analogues of symplectic manifolds for several reasons. Some of these reasons will be discussed below. Actually, Sections \ref{sec:jacobi}--\ref{sec:homogeneous} aim at giving a rigorous conceptual meaning to Arnol'd's statement \eqref{Arnol'd} in the Introduction.

\section{\label{sec:jacobi} Jacobi Brackets and the Gauge Algebroid}
We begin showing that there is a Lie algebra naturally attached to a contact manifold, playing a role analogous to that of the Poisson algebra of (Hamiltonian) functions in Symplectic Geometry. We begin with a

\begin{lemma}
Let $(M,H)$ be a contact manifold and let $\theta_H: TM \to L = TM/H$ be the associated projection. The restriction
\begin{equation}\label{eq:theta_H}
\theta_H: \mathfrak{X}(M,H) \to \Gamma(L)
\end{equation}
is a vector space isomorphism (from contact vector fields to sections of the normal bundle $L$).
\end{lemma}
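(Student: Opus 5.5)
The plan is to prove injectivity and surjectivity separately. Both rest on one observation: the non-degeneracy of the curvature form $\kappa_H$ lets us recover $X$ from $\theta_H(X)$. First I record the characterization of contact vector fields. A vector field $X$ is contact iff its flow preserves $H$, iff $[X,\Gamma(H)]\subseteq\Gamma(H)$. Equivalently, the map
\[
\Gamma(H)\to\Gamma(L),\quad Y\mapsto \theta_H([X,Y])
\]
vanishes identically. This map is $C^\infty(M)$-linear in $Y$ for every $X$, since $\theta_H([X,fY]) = f\theta_H([X,Y]) + X(f)\theta_H(Y)$ and $\theta_H(Y)=0$. So it defines a vector bundle map $H\to L$, which I denote $\phi_X$.

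For injectivity, suppose $X$ is contact and $\theta_H(X)=0$. Then $X\in\Gamma(H)$, and for every $Y\in\Gamma(H)$ we get $\kappa_H(X,Y)=\theta_H([X,Y])=0$. Non-degeneracy of $\kappa_H$ forces $X=0$.

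For surjectivity, fix $\lambda\in\Gamma(L)$ and choose any $X_0\in\mathfrak X(M)$ with $\theta_H(X_0)=\lambda$, using a global splitting $TM\simeq H\oplus L$ given by a partition of unity. We seek $X=X_0+Z$ with $Z\in\Gamma(H)$ such that $X$ is contact. For $Y\in\Gamma(H)$,
\[
\theta_H([X_0+Z,Y]) = \phi_{X_0}(Y)+\kappa_H(Z,Y).
\]
So we need $\kappa_H(Z,\cdot)=-\phi_{X_0}$ as bundle maps $H\to L$. Since $\kappa_H$ is non-degenerate, the map $H\to \operatorname{Hom}(H,L)$, $Z\mapsto \kappa_H(Z,\cdot)$, is a vector bundle isomorphism: it is injective by non-degeneracy and the ranks agree. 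Hence there is a unique smooth $Z$ solving this equation. By construction $X$ is contact with $\theta_H(X)=\lambda$. Uniqueness of $Z$ also re-proves injectivity.

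The main point to be careful about is that $\phi_{X_0}$ is tensorial, so that it is a genuine section of $\operatorname{Hom}(H,L)$. This is exactly the Leibniz computation above. Linearity of $\theta_H$ is clear, so the restriction \eqref{eq:theta_H} is a vector space isomorphism.
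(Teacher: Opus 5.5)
Your proof is correct and follows essentially the same route as the paper. Injectivity comes from the non-degeneracy of $\kappa_H$. For surjectivity, you take an arbitrary lift $X_0$ of $\lambda$ and correct it by the unique section of $H$ that the isomorphism $H \cong \operatorname{Hom}(H,L)$ induced by $\kappa_H$ assigns to the tensorial map $\phi_{X_0}$; the paper does the same, writing it as $X_\lambda = X - \kappa^\sharp(\varphi_X)$.
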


\begin{proof}
Recall that the curvature form $\kappa_H: \wedge^2 H \to L$ is non-degenerate. Hence it determines (mutually inverse) vector bundle isomorphisms
\[
\kappa_\flat: H \to H^\ast \otimes L, \quad \kappa^\sharp: H^\ast \otimes L \to H,
\]
by putting $\kappa_\flat(Y) := \kappa_H(Y,-)$. Now, let $X \in \mathfrak{X}(M,H)$ be such that $\theta_H(X) = 0$. This means that $X \in \Gamma(H)$ and we can compute $\kappa_\flat(X)$. For every $Z \in \Gamma(H)$ we have
\[
\kappa_\flat(X)(Z) = \kappa_H(X,Z) = \theta_H([X,Z]) = 0,
\]
where we used that $[X,Z] \in \Gamma(H)$ (as $X$ is a contact vector field and $Z$ is a section of $H$). So $\kappa_\flat(X) = 0$, hence $X = 0$. This shows that the linear map \eqref{eq:theta_H} is injective. For the surjectivity, let $\lambda \in \Gamma(L)$. Pick any vector field $X \in \mathfrak{X}(M)$ such that $\theta_H(X) = \lambda$ and consider the map
\[
\varphi_X: \Gamma(H) \to \Gamma(L), \quad Y \mapsto \theta_H([X,Y]).
\]
It is easy to see that $\varphi_X$ is $C^\infty(M)$-linear, hence it comes from a vector bundle map $\varphi_X: H \to L$, i.e. a section $\varphi_X$ of $H^\ast \otimes L$. Applying $\kappa^\sharp$, we get a vector field $Y = \kappa^\sharp (\varphi_X) \in \Gamma(H)$. Put $X_\lambda = X - Y$. Clearly $\theta_H(X_\lambda) = \lambda$. In order to conclude the proof it is enough to show that $X_\lambda \in \mathfrak{X}(M,H)$. In other words, we have to show that, for every $Z \in \Gamma(H)$, the commutator $[X_\lambda, Z]$ is in $H$ again, i.e. $\theta_H([X_\lambda, Z]) = 0$. So, compute
\begin{equation}
\theta_H([X_\lambda, Z]) = \theta_H([X, Z]) - \theta_H([Y, Z]) 
= \varphi_X(Z) - \kappa_H(Y,Z) = 0.
\end{equation}
This concludes the proof.
\end{proof}

The contact vector field $X_\lambda$ corresponding to the section $\lambda \in \Gamma(L)$ is sometimes called the \textbf{Hamiltonian vector field} with \textbf{Hamiltonian section} $\lambda$. Using the isomorphism $\mathfrak{X}(M,H) \cong \Gamma(L)$ we can transport the Lie bracket from $\mathfrak{X}(M,H)$ to $\Gamma(L)$. In this way we get a Lie bracket $\{-,-\}$ on $\Gamma(L)$. In other words
\[
\{\lambda, \mu\} = \theta_H([X_\lambda, X_\mu])
\]
or, equivalently,
\[
X_{\{\lambda, \mu\}} = [X_\lambda, X_\mu]
\]
for all $\lambda, \mu \in \Gamma(L)$. It is easy to see that the Lie bracket $\{-,-\}$ enjoys the following property
\[
\{\lambda, f\mu\} = X_\lambda(f)\mu + f\{\lambda, \mu\}
\]
for all $f \in C^\infty(M)$ and all $\lambda, \mu \in \Gamma(L)$. The bracket $\{-,-\}$ is called the \textbf{Jacobi bracket} of the contact manifold $(M,H)$.

\begin{definition}
A \textbf{derivation} of a vector bundle $E \to M$ is an $\mathbb{R}$-linear operator $D: \Gamma(E) \to \Gamma(E)$ enjoying the following Leibniz rule:
\[
D(fe) = X(f)e + fD(e),
\]
for some, necessarily unique, vector field $X \in \mathfrak{X}(M)$, all $f \in C^\infty(M)$, and all $e \in \Gamma(E)$. The vector field $X$ is also denoted $\sigma(D)$ and called the \textbf{symbol} of $D$.
\end{definition}

According to this definition, the Jacobi bracket $\{-,-\}$ is a derivation in one hence both arguments. For every section $\lambda \in \Gamma(L)$, the symbol of the derivation $\{\lambda, -\}$ is the Hamiltonian vector field $X_\lambda$.

Derivations of a vector bundle $E \to M$ are sections of a Lie algebroid $\mathsf D E \Rightarrow M$ sometimes called the \textbf{gauge algebroid} or \textbf{Atiyah algebroid} of $E$. The Lie bracket of derivations is just the commutator and the anchor map $\mathsf D E\to TM$ is the symbol.

\begin{remark}\label{rem:JT}
When $E = L$ is a line bundle, then any first order linear differential operator $\Gamma(L) \to \Gamma(L)$ is a derivation. Hence, in this case, $\mathsf D L$ is the dual module of the first jet bundle $\mathsf J^1 L$ twisted by $L$:
\[
\mathsf D L \cong \text{Hom}(\mathsf J^1 L, L)
\]
and, dually,
\[
\mathsf J^1 L \cong \text{Hom}(\mathsf D L, L).
\]
For instance, the Jacobi bracket $\{-,-\}: \Gamma(L) \times \Gamma(L) \to \Gamma(L)$ of a contact manifold can be seen as a skew-symmetric first order bidifferential operator or, equivalently, as an $L$-valued 2-form on the first jet bundle $\mathsf J^1 L$. We denote the latter $2$-form by
\[
J: \wedge^2 \mathsf J^1 L \to L,
\]
and sometimes call it the \textbf{Jacobi tensor} (because it plays a similar role as the Poisson tensor in Symplectic Geometry).
\end{remark}

\section{\label{sec:atiyah} Atiyah Forms and the Dictionary}
In this section we will put contact structures in a much more symplectic-like shape using the gauge algebroid described in the previous section. To do this we first discuss some basic properties of the gauge algebroid. Let $E \to M$ be a vector bundle and let $\mathsf D E\Rightarrow M$ be its gauge algebroid. We begin noticing that $\mathsf DE$ acts naturally on $E$: the action of a derivation on a section of $E$ is just the tautological one. The Lie algebroid structure on $\mathsf D E$ together with the action on $E$ defines a de Rham-like differential $d_{\mathsf D}$ on the graded vector space
\[
\Omega_{\mathsf D}^\bullet := \Gamma(\wedge^\bullet (\mathsf DE)^\ast \otimes E)
\]
of $E$-valued alternating forms on $\mathsf D E$. The differential of a $k$-form
\[
\omega: \wedge^k \mathsf D E \to E
\]
is defined by
\begin{align*}
d_{\mathsf D} \omega(D_1, \dots, D_{k+1}) &= \sum_{i=1}^{k+1} (-1)^{i+1} D_i (\omega(\dots, \hat{D}_i, \dots)) \\
&\quad + \sum_{i < j} (-1)^{i+j} \omega([D_i, D_j], \dots, \hat{D}_i, \dots, \hat{D}_j, \dots)
\end{align*}
for all $D_1, \dots, D_{k+1} \in \Gamma(\mathsf D E)$, where, as usual, a hat ``$\hat{-}$'' denotes omission. The cochain complex $(\Omega_{\mathsf D}^\bullet, d_{\mathsf D})$ is actually acyclic. To see this notice that the identity map $\mathbb I: \Gamma(E) \to \Gamma(E)$ is a derivation. A direct computation shows that the interior product
\[
\iota_{\mathbb I}: \Omega_{\mathsf D}^\bullet \to \Omega_{\mathsf D}^{\bullet-1}, \quad \omega \mapsto \iota_{\mathbb I} \omega = \omega( \mathbb I, -, \ldots, -)
\]
is a contracting homotopy for $(\Omega_{\mathsf D}^\bullet, d_{\mathsf D})$. 

Now, let $E = L$ be a line bundle. In this case, cochains in $(\Omega_{\mathsf D}^\bullet, d_{\mathsf D})$ will be called \textbf{Atiyah forms} (on $E$). It makes sense to define \emph{symplectic Atiyah forms}.

\begin{definition}
A \textbf{symplectic Atiyah form} (or \textbf{structure}) on $L$ is an Atiyah 2-form $\omega: \wedge^2 \mathsf D L \to L$ such that
\begin{enumerate}
    \item $\omega$ is $d_{\mathsf D}$-closed: $d_{\mathsf D} \omega = 0$, and
    \item $\omega$ is \textbf{non-degenerate}: the interior product $\omega_\flat: \Gamma(\mathsf D L) \to \Gamma(\text{Hom}(\mathsf D L,L)) = \Gamma(\mathsf J^1 L)$, $D \mapsto \iota_D \omega = \omega(D, -)$ is an isomorphism.
\end{enumerate}
\end{definition}

A line bundle equipped with a symplectic Atiyah form is a \textbf{symplectic Atiyah bundle}. Given symplectic Atiyah bundles $(L_1, \omega_1), (L_2, \omega_2)$, a \textbf{symplectic Atiyah morphism} $(L_1, \omega_1) \to (L_2, \omega_2)$ is a vector bundle isomorphism $L_1 \cong L_2$ identifying $\omega_1$ and $\omega_2$.

\begin{theorem}\label{CMvsSAB}
The category of contact manifolds and contactomorphisms is equivalent to the category of symplectic Atiyah bundles and symplectic Atiyah morphisms.
\end{theorem}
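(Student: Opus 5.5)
The plan is to set up the two functors explicitly, using the acyclicity of $(\Omega_{\mathsf D}^\bullet, d_{\mathsf D})$ to pass between $L$-valued $1$-forms and Atiyah $2$-forms. Given a contact manifold $(M,H)$ with $L = TM/H$ and projection $\theta_H \colon TM \to L$, compose with the symbol $\sigma \colon \mathsf D L \to TM$ to get an Atiyah $1$-form $\theta_H \circ \sigma \in \Omega^1_{\mathsf D}$. Then set
\[
\omega_H := d_{\mathsf D}(\theta_H\circ\sigma).
\]
It is automatically $d_{\mathsf D}$-closed. Conversely, given a symplectic Atiyah bundle $(L,\omega)$ over $M$, acyclicity gives $\omega = d_{\mathsf D}\iota_{\mathbb I}\omega$, and I will put $\theta := \iota_{\mathbb I}\omega \colon \mathsf D L \to L$. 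The first task is to show that $\theta$ vanishes on the kernel of $\sigma$. That kernel consists of the $C^\infty(M)$-multiples of $\mathbb I$ (the endomorphisms of $L$), and $\theta(f\mathbb I) = f\,\omega(\mathbb I,\mathbb I) = 0$. So $\theta$ descends to an $L$-valued $1$-form on $TM$, still denoted $\theta$, and I will set $H := \ker\theta$.

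The key steps are as follows.

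(i) Compute $\omega_H$ in terms of the curvature. For $D_1, D_2$ with symbols in $H$ we have $\omega_H(D_1,D_2) = -\theta_H([\sigma D_1,\sigma D_2]) = -\kappa_H(\sigma D_1,\sigma D_2)$. We also have $\iota_{\mathbb I}\omega_H = \mathbb I(\theta_H\sigma) - d_{\mathsf D}\iota_{\mathbb I}(\theta_H\sigma) = \theta_H\sigma$, using $[\mathbb I,D]=0$ and $\sigma(\mathbb I)=0$.

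(ii) Prove nondegeneracy of $\omega_H$. Decompose $\mathsf D L$ pointwise, after choosing a local complement, as $\langle \mathbb I\rangle \oplus \tilde H \oplus \langle D_0\rangle$, where $\tilde H$ lifts $H$ and $D_0$ has symbol transverse to $H$. Then $\omega_H(\mathbb I,D_0) = \theta_H(\sigma D_0) \neq 0$, $\omega_H(\mathbb I,\tilde H) = 0$, and $\omega_H$ restricted to $\tilde H$ is $-\kappa_H$, which is nondegenerate. A block-triangular argument then gives nondegeneracy.

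(iii) Prove the converse. Start from a nondegenerate $\omega$ and $\theta = \iota_{\mathbb I}\omega$. Since $\iota_{\mathbb I}\theta = 0$, the form $\theta$ is nonzero: otherwise $\mathbb I$ would lie in the kernel of $\omega_\flat$. Hence $H = \ker\theta$ is a hyperplane distribution, and $L \cong TM/H$ via $\theta$. Running the computation in (i) backwards shows that $\kappa_H$ equals $-\omega$ on lifts of $H$. Now $\sigma^{-1}(H)$ is the $\omega$-orthogonal of $\mathbb I$: indeed $\omega(\mathbb I,D) = \theta(\sigma D)$. Therefore $\mathbb I$ spans the kernel of $\omega$ restricted to $\sigma^{-1}(H)$, and the induced form on $\sigma^{-1}(H)/\langle\mathbb I\rangle \cong H$ is nondegenerate, which is exactly the statement that $\kappa_H$ is nondegenerate.

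(iv) Check that the two constructions are mutually inverse. Going from $H$ to $\omega_H$ and back to $\ker(\iota_{\mathbb I}\omega_H) = \ker\theta_H$ returns $H$. In the other direction, $\omega = d_{\mathsf D}\theta$ is recovered up to the identification $L \cong TM/H$, and this identification is exactly an isomorphism of symplectic Atiyah bundles.

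For functoriality, a contactomorphism $\Phi$ induces a line bundle isomorphism $T M_1/H_1 \to TM_2/H_2$ given by $d\Phi \bmod H$. Any vector bundle isomorphism $L_1 \cong L_2$ covering $\Phi$ induces an isomorphism of gauge algebroids intertwining $d_{\mathsf D}$ and $\mathbb I$. Naturality of the construction then shows that $\Phi$ goes to a symplectic Atiyah morphism. Conversely, a symplectic Atiyah morphism preserves $\iota_{\mathbb I}\omega$, hence preserves the kernels, so the underlying diffeomorphism is a contactomorphism. Faithfulness and fullness follow because a morphism of either kind is determined by its base map: on the contact side the bundle map is forced to be $d\Phi \bmod H$, since $\theta$ must be preserved.

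I expect the main obstacle to be the linear-algebra step (iii), identifying nondegeneracy of $\omega$ with nondegeneracy of $\kappa_H$ together with $\theta \neq 0$. It requires some care with signs and with the fact that $\sigma^{-1}(H)$ is the $\omega$-orthogonal of $\mathbb I$. I would handle it pointwise using the splitting in (ii).
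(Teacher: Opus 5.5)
Your proposal is correct and follows the same route as the paper's sketch. In one direction you take $\omega = d_{\mathsf D}(\theta_H\circ\sigma)$; in the other you take the primitive $\iota_{\mathbb I}\omega$ and descend it along $\sigma$ using $\ker\sigma = \langle\mathbb I\rangle$. Your coisotropic-reduction argument ($\sigma^{-1}(H) = \mathbb I^{\perp_\omega}$, with kernel $\langle\mathbb I\rangle$) supplies the nondegeneracy equivalence that the paper only cites, and your treatment of morphisms fills in what the paper omits.

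One small fix in (iii): $\theta$ is nowhere vanishing simply because $\theta_x = \omega_x(\mathbb I_x,-)$ with $\mathbb I_x \neq 0$ and $\omega_x$ nondegenerate. The identity $\iota_{\mathbb I}\theta = 0$ plays no role in that step.
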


\begin{proof}[Proof (a sketch)]\quad

\textbf{From contact manifolds to symplectic Atiyah bundles.} Let $(M,H)$ be a contact manifold. Denote by $L = TM/H$ the normal bundle. We will construct a symplectic Atiyah form on $L$. The projection $\theta_H: TM \to L$ can be pre-composed with the symbol map $\sigma: \mathsf D L \to TM$ giving an Atiyah 1-form $\Theta = \theta_H \circ \sigma$. Taking the differential, we get a $d_{\mathsf D}$-closed Atiyah 2-form $\omega = d_{\mathsf D} \Theta$. The maximal non-integrability of $H$ now translates to $\omega$ being non-degenerate \cite{Vit2018}, so $\omega$ is a symplectic Atiyah form on $L$, as desired.

\textbf{From symplectic Atiyah bundles to contact manifolds.} Let $(L \to M, \omega)$ be a symplectic Atiyah bundle. As $d_{\mathsf D} \omega = 0$ and the interior product with the identity derivation $\mathbb{I}: \Gamma(L) \to \Gamma(L)$ is a contracting homotopy for the cochain complex $(\Omega_{\mathsf D}^\bullet, d_{\mathsf D})$, we get that $\Theta := \iota_\mathbb{I} \omega$ is a primitive for $\omega$, i.e. $\omega = d_{\mathsf D} \Theta$. From skew-symmetry, $\Theta(\mathbb{I}) = 0$. But (in this line bundle case) the derivation $\mathbb{I}$ generates the kernel of the symbol $\sigma: \mathsf D L \to TM$, hence $\Theta$ descends to an $L$-valued 1-form $\theta: TM \to L$ (the unique 1-form such that $\Theta = \theta \circ \sigma$). Put $H = \ker \theta$. The non-degeneracy of $\omega$ now translates to $H$ being a contact structure.
\end{proof}

As a first application of the language of symplectic Atiyah bundles we state without proof the following

\begin{proposition}\label{prop:J=w^-1}
Let $(M,H)$ be a contact manifold and let $(L \to M, \omega)$ be the associated symplectic Atiyah bundle. The Jacobi tensor
\[
J: \wedge^2 \mathsf J^1 L \to L
\]
of $(M,H)$ is the inverse of the symplectic Atiyah form
\[
\omega: \wedge^2 \mathsf D L \to L.
\]
\end{proposition}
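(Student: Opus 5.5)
We have to show that $J$ is the inverse of $\omega$. Concretely, since $\omega_\flat\colon \mathsf D L \to \mathsf J^1 L$ is an isomorphism, there is a unique $L$-valued 2-form $\omega^{-1}$ on $\mathsf J^1 L$, defined by
\[
\omega^{-1}(\alpha,\beta) := \omega\big(\omega_\flat^{-1}\alpha, \omega_\flat^{-1}\beta\big), \qquad \alpha,\beta \in \mathsf J^1 L .
\]
Since both $J$ and $\omega^{-1}$ are $C^\infty(M)$-bilinear and every $\mathsf J^1 L$ is spanned pointwise by holonomic sections $j^1\lambda$, it suffices to prove
\[
J(j^1\lambda, j^1\mu) = \omega^{-1}(j^1\lambda, j^1\mu), \qquad \lambda,\mu\in\Gamma(L).
\]
Here $J(j^1\lambda,j^1\mu)=\{\lambda,\mu\}$ by Remark~\ref{rem:JT}. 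I would prove this equality in two steps.

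\emph{Step 1: identify $\omega_\flat^{-1}(j^1\lambda)$.} Recall the pairing $\mathsf J^1L\cong \mathrm{Hom}(\mathsf D L,L)$, namely $j^1\lambda(D)=D\lambda$. Let $\Delta_\lambda := \{\lambda,-\}$, which is a derivation of $L$ with symbol $X_\lambda$. I expect to find
\[
\iota_{\Delta_\lambda}\omega = \pm\, j^1\lambda,
\]
with a sign fixed by conventions. To check this, write $\omega=d_{\mathsf D}\Theta$ with $\Theta=\theta_H\circ\sigma$, and compute for any derivation $D$:
\[
\omega(\Delta_\lambda, D) = \Delta_\lambda(\Theta(D)) - D(\Theta(\Delta_\lambda)) - \Theta([\Delta_\lambda, D]).
\]
Here $\Theta(\Delta_\lambda)=\theta_H(X_\lambda)=\lambda$. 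Test first with $D=\mathbb I$: then $\Theta(\mathbb I)=0$ and $[\Delta_\lambda,\mathbb I]=0$, so $\omega(\Delta_\lambda,\mathbb I)=-\lambda=-\mathbb I\lambda$. Next take $D$ with symbol $Z\in\Gamma(H)$. Then $\Theta(D)=0$ and $\Theta([\Delta_\lambda,D])=\theta_H([X_\lambda,Z])=0$, because $X_\lambda$ is a contact vector field. Hence $\omega(\Delta_\lambda,D)=-D\lambda$.

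It remains to treat a general $D$, and this is the point needing care. Decompose $D=a\mathbb I+D_H+\Delta_\nu$ with $\sigma(D_H)\in H$; such a decomposition exists since contact vector fields span $TM/H$. Then show $\omega(\Delta_\lambda,\Delta_\nu)=\{\lambda,\nu\}-\{\nu,\lambda\}-\{\lambda,\nu\}=-\{\nu,\lambda\}=-\Delta_\nu\lambda$. Combining the three cases via $C^\infty$-linearity gives $\omega(\Delta_\lambda,D)=-D\lambda$ for all $D$, i.e.\ $\omega_\flat(\Delta_\lambda)=-j^1\lambda$.

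\emph{Step 2: conclude.} From Step 1,
\[
\omega^{-1}(j^1\lambda,j^1\mu)=\omega(\Delta_\lambda,\Delta_\mu)=-\Delta_\mu\lambda=\{\lambda,\mu\}=J(j^1\lambda,j^1\mu).
\]
Up to the overall sign convention for "inverse" (for instance, whether $\omega^\sharp$ is defined by $\iota_D\omega$ or $\omega(-,D)$), this is the claimed identity.

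The main obstacle is the bookkeeping in Step 1. One must confirm that the decomposition of $\mathsf D L$ used there is legitimate, that the $C^\infty$-linearity argument is applied correctly, and that the signs are consistent throughout, since the sign of the final identity depends on them.
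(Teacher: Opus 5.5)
The paper states this proposition without proof, so there is no argument of its own to compare with. Your proposal is correct and complete as written: $\omega_\flat(\Delta_\lambda)=-j^1\lambda$ is verified correctly, and the reduction to holonomic sections is valid because both sides are tensorial.

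Two remarks.

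\emph{The decomposition in Step 1 can be skipped.} The decomposition is legitimate globally: put $\nu:=\Theta(D)$; then $D-\Delta_\nu$ has symbol in $H$. The separate $a\mathbb I$ term is redundant, since $\mathbb I$ has symbol $0\in H$. Still, you can bypass the case split. For any derivation $D$ with symbol $Y$, write $Y=X_{\theta_H(Y)}+Z$ with $Z\in\Gamma(H)$. Then $\Delta_\lambda(\Theta(D))=\{\lambda,\theta_H(Y)\}=\theta_H([X_\lambda,Y])=\Theta([\Delta_\lambda,D])$. So the first and last terms of $d_{\mathsf D}\Theta(\Delta_\lambda,D)$ always cancel, and $\iota_{\Delta_\lambda}\omega=-j^1\lambda$ follows in one line. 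This identity says that $\Delta_\lambda$ preserves $\Theta$, the Atiyah version of the fact that $X_\lambda$ preserves $\theta_H$.

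\emph{The sign hedge at the end is unnecessary.} With your own definition $\omega^{-1}(\alpha,\beta)=\omega(\omega_\flat^{-1}\alpha,\omega_\flat^{-1}\beta)$, the two minus signs coming from $\omega_\flat^{-1}(j^1\lambda)=-\Delta_\lambda$ cancel. You get $\omega^{-1}(j^1\lambda,j^1\mu)=\{\lambda,\mu\}$ exactly, so $J=\omega^{-1}$ on the nose. The sign would flip only under the transposed convention $\omega^{-1}(\alpha,\beta)=\langle\beta,\omega_\flat^{-1}\alpha\rangle$.
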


Theorem \ref{CMvsSAB} says that contact structures are essentially the same as symplectic Atiyah forms. Moreover, together with Proposition \ref{prop:J=w^-1}, it suggests the following meta-mathematical principle that puts in a more conceptual framework Arnol'd statement:

\begin{principle}[\textbf{Symplectic-to-Contact Dictionary}] Every natural construction / statement in Symplectic (and symplectic-related) Geometry translates into an analogous construction / statement in Contact (and contact-related) Geometry. The translation is obtained via the following substitutions (that play the role of a \textbf{Symplectic-to-Contact Dictionary}):
\begin{equation*}
\begin{aligned}
C^\infty(M) & \leadsto \Gamma (L) \\
TM & \leadsto \mathsf DL \\
T^\ast M & \leadsto \mathsf J^1 L
\end{aligned}\quad .
\end{equation*}
\end{principle}

If we apply the Symplectic-to-Contact Dictionary to the definition of symplectic structure itself we obtain the definition of symplectic Atiyah bundle, showing how does the Dictionary work in the simplest possible case. In Sections \ref{sec:jacobibundles}--\ref{sec:hgs} we will present more applications of the Dictionary.

\section{\label{sec:homogeneous} Homogeneous Symplectic Structures}
There is another version of the Symplectic-to-Contact Dictionary that is often useful \cite{Bru2017,Tor2020,Vit2020,Gra2022} and we now discuss. We know already that line bundles with line bundle isomorphisms play an important role in the Dictionary. In some cases it is better to consider more general morphisms, namely line bundle maps that are only fiber-wise invertible. We have the following simple

\begin{proposition}\label{prop:LBvsHM}
The category of line bundles and fiber-wise invertible line bundle maps is equivalent to the category of principal $\mathbb{R}^\ast$-bundles and principal bundle maps (here $\mathbb{R}^\ast$ is the multiplicative Lie group of non-zero reals).
\end{proposition}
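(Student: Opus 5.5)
I will build two functors and check that they are quasi-inverse. \emph{From line bundles to principal bundles.} To a line bundle $L \to M$ I assign the slit bundle $\widetilde M := L^\times = L \smallsetminus 0_M$ (the complement of the zero section). The multiplicative group $\mathbb R^\ast$ acts on it by fiberwise scalar multiplication. A local trivialization $L|_U \cong U \times \mathbb R$ restricts to an equivariant diffeomorphism $L^\times|_U \cong U \times \mathbb R^\ast$. So $L^\times \to M$ is a principal $\mathbb R^\ast$-bundle, with transition functions equal to those of $L$, now viewed as $\mathbb R^\ast$-valued. A fiberwise invertible line bundle map $\phi: L_1 \to L_2$ covering $\underline\phi: M_1 \to M_2$ sends nonzero vectors to nonzero vectors and is linear on fibers. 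Hence it restricts to an $\mathbb R^\ast$-equivariant smooth map $\phi^\times: L_1^\times \to L_2^\times$, which is a principal bundle map. Functoriality is immediate because we are just restricting maps.

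\emph{From principal bundles to line bundles.} To a principal $\mathbb R^\ast$-bundle $P \to M$ I assign the associated bundle $L_P := P \times_{\mathbb R^\ast} \mathbb R$. This is the quotient of $P\times\mathbb R$ by $(p t, s) \sim (p, ts)$, using the standard representation of $\mathbb R^\ast$ on $\mathbb R$. A principal bundle map $F: P_1 \to P_2$ induces $[p,s] \mapsto [F(p), s]$. This is well defined by equivariance, linear on fibers, and fiberwise invertible, since it is an isomorphism on each fiber: both fibers are identified with $\mathbb R$ once a point $p$ is chosen. Again functoriality is clear.

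\emph{Natural isomorphisms.} First, for a principal bundle $P$, I use the map $P \to (L_P)^\times$, $p \mapsto [p,1]$. It is equivariant, since $[pt,1] = [p,t] = t\cdot[p,1]$. It is a bijection on each fiber, and in local trivializations it reads as the identity of $U\times\mathbb R^\ast$, so it is a diffeomorphism. Second, for a line bundle $L$, I use $L^\times \times_{\mathbb R^\ast} \mathbb R \to L$, $[v,s] \mapsto s v$. It is well defined because $[vt, s]$ and $[v, ts]$ both map to $tsv$. It is fiberwise linear and invertible: for $v \ne 0$, $s \mapsto sv$ is an isomorphism $\mathbb R \to L_x$. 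So it is a line bundle isomorphism covering the identity. Naturality of both maps with respect to morphisms is a one-line check, e.g.\ $\phi(sv) = s\,\phi(v)$. Together these give the equivalence. Alternatively, one can observe that $(-)^\times$ is faithful (since $L^\times$ is dense in $L$ and maps are continuous), full, and essentially surjective.

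The only point needing a little care is fullness, i.e.\ that every principal bundle map $F: L_1^\times \to L_2^\times$ comes from a smooth fiberwise linear map $L_1 \to L_2$. I will handle it through the associated-bundle functor above rather than by extending $F$ by $0$ on the zero section directly. In local trivializations, $F(x,t) = (\underline F(x), t\,g(x))$ with $g$ smooth and nowhere zero, by equivariance. The extension $(x,s) \mapsto (\underline F(x), s\,g(x))$ is then visibly smooth and fiberwise invertible. Everything else is routine.
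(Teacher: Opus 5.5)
Your proof is correct and follows the same route as the paper's sketch: remove the zero section in one direction, take the associated bundle of the standard representation in the other, and your explicit natural isomorphisms $p\mapsto[p,1]$ and $[v,s]\mapsto sv$ fill in what the paper leaves to the reader. The only difference is that the paper takes $\widetilde L = L^\ast\smallsetminus 0$, so that sections of $L$ later become degree-$1$ homogeneous functions; this does not matter for the equivalence, because dualization (with $F\mapsto (F^{-1})^\ast$ on morphisms) is a self-equivalence of the category of line bundles, and your convention even has the small advantage that your two functors are quasi-inverse as they stand, whereas with $L^\ast\smallsetminus 0$ and the standard representation the round trip returns $L^\ast$ rather than $L$.
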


\begin{proof}
The equivalence is realized by the following obvious constructions. For every line bundle $L \to M$, we consider the principal $\mathbb{R}^\ast$-bundle $\widetilde{L} := L^\ast \smallsetminus 0$ (where the $\mathbb{R}^\ast$-action is given by fiber-wise scalar multiplication). In the other direction, for a principal $\mathbb{R}^\ast$-bundle $P \to M$ we consider the associated line bundle corresponding to the standard action of $\mathbb{R}^\ast$ on the 1-dimensional vector space $\mathbb{R}$. We leave it to the reader to check how these functors act on morphisms.
\end{proof}

In what follows, a principal $\mathbb{R}^\ast$-bundle $\widetilde{L}$ will be also called a \textbf{homogeneous manifold}. The $\mathbb{R}^\ast$-action will be also denoted $h$. This terminology / notation is motivated by the fact that $h$ allows to talk about \emph{homogeneous functions} on $\widetilde{L}$. Namely, a smooth function $f \in C^\infty(\widetilde{L})$ is \textbf{homogeneous of degree $p$} if
\[
h_t^\ast f = t^p f, \quad \text{for all } t \in \mathbb{R}^\ast.
\]

Proposition \ref{prop:LBvsHM} suggests that any natural construction involving line bundles can be rephrased using homogeneous manifolds. To make a first example, given a line bundle $L \to M$ and the associated homogeneous manifold $\widetilde{L}$, denote by $\pi: \widetilde{L} \to M$ the projection, and notice that a section $\lambda \in \Gamma(L)$ can be regarded as a smooth function on $\widetilde{L}$, denoted $\widetilde{\lambda}$, by putting
\[
\widetilde{\lambda}(\varepsilon) := \langle \varepsilon, \lambda_x \rangle,
\]
for all $\varepsilon \in \widetilde{L}$, where $x = \pi(\varepsilon)$ and $\langle -,-\rangle : L^\ast \times_M L \to \mathbb R$ is the duality pairing. The assignment $\lambda \mapsto \widetilde{\lambda}$ is a bijection between sections of $L$ and homogeneous functions of degree $1$. Similarly, for every derivation $D$ of $L$, there exists a unique vector field $\widetilde{D}$ on $\widetilde{L}$ such that
\[
\widetilde{D(\lambda)} = \widetilde{D} \widetilde{\lambda},
\]
for all $\lambda \in \Gamma(L)$. The vector field $\widetilde{D}$ is \emph{homogeneous of degree $0$} in the sense that
\[
h_t^\ast \widetilde{D} = \widetilde{D}, \quad \text{for all } t \in \mathbb{R}^\ast,
\]
and the assignment $D \mapsto \widetilde{D}$ is a bijection between derivations of $L$ and homogeneous vector fields of degree $0$ on $\widetilde{L}$. Finally, for every Atiyah $k$-form $\omega \in \Omega_{\mathsf D}^\bullet$, there exists a unique differential $k$-form $\widetilde{\omega}$ on $\widetilde{L}$ such that
\[
\widetilde{\omega}(\widetilde D_1, \dots, \widetilde D_k) = \widetilde{\omega(D_1, \dots, D_k)},
\]
for all $D_1, \dots, D_k \in \Gamma(\mathsf D L)$. The $k$-form $\widetilde{\omega}$ is \emph{homogeneous of degree $1$}, i.e.
\[
h^\ast_t \widetilde{\omega} = t \widetilde{\omega}, \quad \text{for all } t \in \mathbb{R}^\ast,
\]
and the assignment $\omega \mapsto \widetilde{\omega}$ is a bijection between Atiyah forms on $L$ and homogeneous differential forms of degree $1$ on $\widetilde{L}$. We are now ready to state the main result of this section. We begin with a

\begin{definition}
A \textbf{homogeneous symplectic structure} on a homogeneous manifold $\widetilde{L}$ is a symplectic form $\widetilde{\omega}$ on $\widetilde{L}$ which is additionally homogeneous of degree $1$. A homogeneous manifold equipped with a homogeneous symplectic structure is a \textbf{homogeneous symplectic manifold}.
\end{definition}

\begin{theorem}\label{theor:CMvsHSM}
The category of contact manifolds and contactomorphisms is equivalent to the category of homogeneous symplectic manifolds and $\mathbb R^\ast$-equivariant symplectomorphisms.
\end{theorem}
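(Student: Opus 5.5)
The plan is to compose Theorem \ref{CMvsSAB} with a second equivalence: symplectic Atiyah bundles (with symplectic Atiyah morphisms) versus homogeneous symplectic manifolds (with $\mathbb R^\ast$-equivariant symplectomorphisms). Throughout I use the tilde correspondences established just before the statement:
\begin{equation*}
\lambda \mapsto \widetilde\lambda, \qquad D \mapsto \widetilde D, \qquad \omega \mapsto \widetilde\omega.
\end{equation*}
These identify sections of $L$, derivations of $L$ and Atiyah forms with degree $1$ functions, degree $0$ vector fields and degree $1$ forms on $\widetilde L$, respectively. Given a symplectic Atiyah bundle $(L,\omega)$, I will show that $\widetilde\omega$ is a symplectic form on $\widetilde L$. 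By construction it is homogeneous of degree $1$.

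\textbf{Closedness.} The key identity is $\widetilde{d_{\mathsf D}\omega} = d\widetilde\omega$. Both sides are homogeneous forms of degree $1$. I would evaluate them on degree $0$ vector fields $\widetilde D_1,\dots,\widetilde D_{k+1}$ and compare with the Cartan formula. This works because of three facts:
\begin{itemize}
\item $\widetilde D \widetilde\lambda = \widetilde{D\lambda}$;
\item $[\widetilde D_1,\widetilde D_2] = \widetilde{[D_1,D_2]}$, which holds since both sides act identically on the degree $1$ functions $\widetilde\lambda$;
\item the degree $0$ vector fields span $T_\varepsilon\widetilde L$ at every point $\varepsilon$, since $\mathsf DL$ has rank $\dim M+1=\dim\widetilde L$ and $D\mapsto\widetilde D_\varepsilon$ is fibrewise injective.
\end{itemize}
Hence $d_{\mathsf D}\omega=0$ if and only if $d\widetilde\omega=0$.

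\textbf{Non-degeneracy.} The same pointwise identification $\mathsf D_xL\cong T_\varepsilon\widetilde L$ for $\pi(\varepsilon)=x$ gives $\widetilde\omega_\varepsilon(\widetilde D_\varepsilon,\widetilde D'_\varepsilon)=\langle\varepsilon,\omega_x(D,D')\rangle$. Since $\varepsilon\neq0$, non-degeneracy of $\omega_x$ is equivalent to non-degeneracy of $\widetilde\omega_\varepsilon$. Conversely, a degree $1$ symplectic form on $\widetilde L$ is $\widetilde\omega$ for a unique Atiyah $2$-form $\omega$, and the two equivalences above show that $\omega$ is symplectic.

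\textbf{Morphisms.} By Proposition \ref{prop:LBvsHM}, a line bundle isomorphism $\phi: L_1\to L_2$ covering a diffeomorphism corresponds to an $\mathbb R^\ast$-equivariant diffeomorphism $\widetilde\phi:\widetilde L_1\to\widetilde L_2$ (via the inverse transpose). Every equivariant diffeomorphism arises this way, since it is a principal bundle map between principal bundles, hence an isomorphism. Naturality of the tilde constructions gives $\widetilde\phi^\ast\widetilde\omega_2=\widetilde{\phi^\ast\omega_2}$, where $\phi^\ast$ denotes pullback of Atiyah forms along the induced isomorphism $\mathsf DL_1\cong\mathsf DL_2$. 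So $\phi$ is a symplectic Atiyah morphism exactly when $\widetilde\phi$ is a symplectomorphism. This yields an equivalence (indeed an isomorphism) of categories, and composing with Theorem \ref{CMvsSAB} proves the statement.

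Concretely, the contact manifold $(M,H)$ is sent to $\widetilde L=(TM/H)^\ast\smallsetminus 0$, the annihilator of $H$ minus the zero section. The form $\widetilde\omega=d\widetilde\Theta$ is then the restriction of the canonical symplectic form of $T^\ast M$; this is a useful sanity check.

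\textbf{Main obstacle.} I expect the main difficulty to be the careful verification of $\widetilde{d_{\mathsf D}\omega}=d\widetilde\omega$. That identity requires the bracket compatibility and the spanning property, in particular the fibrewise identification $\mathsf D_xL\cong T_\varepsilon\widetilde L$. My first attempt would be to prove this identification in a local trivialization. There $L=M\times\mathbb R$ and $\widetilde L=M\times\mathbb R^\ast$ with fibre coordinate $z$, and a derivation $X+a\,\mathbb I$ corresponds to $\widetilde{X+a\mathbb I}=X+a\,z\partial_z$. Injectivity and the bracket identity can then be checked directly from this formula.
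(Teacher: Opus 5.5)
Your proposal is correct and follows the same route as the paper. You compose Theorem~\ref{CMvsSAB} with the tilde correspondence, use $d\widetilde\omega=\widetilde{d_{\mathsf D}\omega}$ for closedness and the pointwise identification $\mathsf D_xL\cong T_\varepsilon\widetilde L$ for non-degeneracy, and you fill in the bracket, spanning and morphism details that the paper's sketch leaves as ``easy to check''. The only quibble is your parenthetical claim of an ``isomorphism'' of categories: an arbitrary principal $\mathbb R^\ast$-bundle is only isomorphic, not equal, to some $L^\ast\smallsetminus 0$, so you get an equivalence, which is all the statement requires.
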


\begin{proof}[Proof (a sketch, see also \cite{Bru2017})]\quad

\textbf{From contact manifolds to homogeneous symplectic manifolds.} Let $(M,H)$ be a contact manifold and let $(L,\omega)$ be the corresponding symplectic Atiyah bundle. The symplectic Atiyah structure $\omega$ determines a homogeneous 2-form $\widetilde{\omega}$ of degree $1$ on $\widetilde{L}$. It is easy to check that $d\widetilde{\omega} = \widetilde{d_{\mathsf D} \omega} = 0$, i.e. $\widetilde{\omega}$ is a closed 2-form. Finally, as $\omega$ is non-degenerate, $\widetilde{\omega}$ is non-degenerate as well. In other words it is a homogeneous symplectic structure on $\widetilde{L}$.

\textbf{From homogeneous symplectic manifolds to contact manifolds.} Let $(\widetilde{L}, \widetilde{\omega})$ be a homogeneous symplectic manifold. The homogeneous symplectic structure $\widetilde{\omega}$ comes from an Atiyah 2-form $\omega$ on $L$. As $\widetilde{\omega}$ is symplectic, $\omega$ is symplectic as well, i.e. $(L,\omega)$ is a symplectic Atiyah bundle. Finally, $(L,\omega)$ corresponds to a contact manifold $(M,H)$.
\end{proof}

The homogeneous symplectic manifold $(\widetilde{L}, \widetilde{\omega})$ associated to a contact manifold $(M,H)$ is sometimes called the \textbf{symplectization} (or the \textbf{symplectic cone}) of $(M,H)$.

\begin{proposition}\label{prop:JBvsHPB}
Let $(M,H)$ be a contact manifold and let $(\widetilde{L}, \widetilde{\omega})$ be its symplectization. The Jacobi bracket
\[
\{-,-\}: \Gamma(L) \times \Gamma(L) \to \Gamma(L)
\]
of $(M,H)$ and the Poisson bracket $\{-,-\}_{\widetilde{\omega}}$ of $(\widetilde{L}, \widetilde{\omega})$ are related by the following formula
\[
\{\widetilde{\lambda}, \tilde{\mu}\}_{\widetilde \omega} = \widetilde{\{\lambda, \mu\}}, \quad \text{for all } \lambda, \mu \in \Gamma(L).
\]
Equivalently, the Jacobi tensor
\[
J: \wedge^2 \mathsf J^1 L \to L
\]
and the Poisson tensor
\[
\widetilde J = \widetilde{\omega}{}^{-1} \in \Gamma(\wedge^2 T\widetilde{L})
\]
are related by
\[
\langle \widetilde{J}, \widetilde{\psi} \wedge \widetilde{\chi} \rangle = \widetilde{\langle J, \psi \wedge \chi \rangle}, \quad \text{for all } \psi, \chi \in \Gamma(\mathsf J^1 L) = \Omega_{\mathsf D}^1.
\]
\end{proposition}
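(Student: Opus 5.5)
The plan is to transport everything through the correspondence $D \mapsto \widetilde D$, $\omega \mapsto \widetilde\omega$ of Section \ref{sec:homogeneous}, and to compare Hamiltonian objects on both sides.

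First I extend the dictionary to first jets. A section $\psi \in \Gamma(\mathsf J^1 L) = \Omega^1_{\mathsf D}$ is an Atiyah $1$-form, so it determines a homogeneous $1$-form $\widetilde\psi$ of degree $1$ on $\widetilde L$. The key instance is $\psi = j^1\lambda = d_{\mathsf D}\lambda$, where $\lambda$ is regarded as an Atiyah $0$-form. Using $d\widetilde\omega = \widetilde{d_{\mathsf D}\omega}$ (already used in the proof of Theorem \ref{theor:CMvsHSM}, and in degree $0$ just the identity $\widetilde D\widetilde\lambda = \widetilde{D\lambda}$), I get $\widetilde{j^1\lambda} = d\widetilde\lambda$.

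Next I identify Hamiltonian vector fields. Proposition \ref{prop:J=w^-1} says that $J = \omega^{-1}$. Hence the derivation $\Delta_\lambda := \{\lambda,-\}$ is characterized by
\[
\iota_{\Delta_\lambda}\omega = \pm\, j^1\lambda,
\]
with the sign fixed by the conventions implicit in Proposition \ref{prop:J=w^-1}. The symbol of $\Delta_\lambda$ is $X_\lambda$. Applying tildes, and using that contraction commutes with the correspondence (from the defining identity $\widetilde\omega(\widetilde D_1,\dots)=\widetilde{\omega(D_1,\dots)}$, since degree-$0$ homogeneous vector fields span $T\widetilde L$ pointwise), I get
\[
\iota_{\widetilde{\Delta_\lambda}}\widetilde\omega = \pm\, d\widetilde\lambda .
\]
So $\widetilde{\Delta_\lambda}$ is the Hamiltonian vector field of $\widetilde\lambda$ with respect to $\widetilde\omega$, provided the Poisson convention on $\widetilde L$ matches the one defining $J$ from $\omega$. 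Therefore
\[
\{\widetilde\lambda,\widetilde\mu\}_{\widetilde\omega} = \widetilde{\Delta_\lambda}\widetilde\mu = \widetilde{\Delta_\lambda\mu} = \widetilde{\{\lambda,\mu\}} .
\]

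For the tensor version, I use that $\widetilde J = \widetilde\omega^{-1}$ and $J = \omega^{-1}$, and that inversion commutes with the tilde correspondence. Both are fiberwise statements, and the correspondence is a fiberwise isomorphism $\mathsf D L|_x \cong T_\varepsilon\widetilde L$ intertwining $\omega$ and $\widetilde\omega$ (up to the scalar $\varepsilon$). This gives the identity for all $\psi,\chi$. Alternatively, both sides are $C^\infty(M)$-bilinear in $\psi,\chi$ (with $\widetilde{f\psi} = \pi^*f\,\widetilde\psi$), and jets $j^1\lambda$ generate $\mathsf J^1L$ locally, so the tensor identity reduces to the bracket identity.

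The main obstacle is the pointwise bookkeeping: showing that $\widetilde{\cdot}$ commutes with interior products and with inverting a nondegenerate form. This requires checking that the map $\mathsf D_xL \to T_\varepsilon \widetilde L$, $D\mapsto \widetilde D_\varepsilon$, is a linear isomorphism, which holds because both sides have dimension $\dim M+1$ and $\mathbb I \mapsto$ the Euler vector field. One must also keep the degree-$1$ twist by $\varepsilon$ consistent on both sides. Sign conventions should then match automatically, since both inverses are defined by the same formula.
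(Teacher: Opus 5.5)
Your argument is correct. The paper omits the proof of this proposition, and your route is the deduction its setup points to: combine Proposition~\ref{prop:J=w^-1} with the fiberwise isomorphism $\pi^\ast\mathsf D L\cong T\widetilde L$, $D\mapsto\widetilde D$, under which $\widetilde\omega_\varepsilon$ corresponds to $\varepsilon\circ\omega_x$, so that contractions, inverses and $j^1\lambda\mapsto d\widetilde\lambda$ all transport, and the signs agree because the same inverse convention is used on both sides. The one step you should make explicit is injectivity of $\mathsf D_xL\to T_\varepsilon\widetilde L$. It follows from three facts: $d\pi(\widetilde D)=\sigma(D)$ (apply $\widetilde D$ to $\pi^\ast f\,\widetilde\lambda=\widetilde{f\lambda}$), $\ker\sigma=\langle\mathbb I\rangle$, and the Euler field $\widetilde{\mathbb I}$ is nowhere zero.
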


Theorem \ref{theor:CMvsHSM} says that contact structures are essentially the same as homogeneous symplectic structures. Moreover, together with Proposition \ref{prop:JBvsHPB}, it suggests the following new

\begin{principle}[\textbf{Symplectic-to-Contact Dictionary 2}] Every natural construction / statement in Symplectic (and symplectic-related) Geometry translates into an analogous construction / statement in Contact (and contact-related) Geometry. The translation is obtained replacing ordinary manifolds with homogeneous manifolds, and implementing the correct homogeneity properties.
\end{principle}

If we apply the Symplectic-to-Contact Dictionary 2 to the definition of symplectic manifold itself we obtain the definition of homogeneous symplectic manifold. The advantage of Dictionary 2 over Dictionary 1 is that it makes you deal with ordinary structures (vector fields, differential forms, etc.) instead of their Atiyah analogues (derivations, Atiyah forms, etc.). The disadvantage of Dictionary 2 is that one has to guess what the correct homogeneity property is in each case separately (although the obvious relation between the two dictionaries allows to make the correct guess in most cases).

\section{\label{sec:jacobibundles} Jacobi bundles}
We begin this section applying the Symplectic-to-Contact Dictionary to Poisson structures. We first
adopt version 1 of the Dictionary. If we do so, we find \emph{Jacobi structures}.

\begin{definition}\label{def:Jacobi}
A \textbf{Jacobi bundle} is a line bundle $L \to M$ equipped with a \textbf{Jacobi
bracket}, i.e. a Lie bracket
\[
\{-,-\}: \Gamma(L) \times \Gamma(L) \to \Gamma(L)
\]
which is additionally a derivation in each argument. A \textbf{Jacobi manifold} is a manifold
equipped with a Jacobi bundle. A \textbf{Jacobi map} between the Jacobi bundles $(L_1, \{-,-\}_1)$, $(L_2, \{-,-\}_2)$ is a fiber-wise invertible line bundle map $F : L_1 \to L_2$ such that $\{F^\ast \lambda, F^\ast \lambda'\}_1 = F^\ast \{ \lambda, \lambda'\}_2$, for all $\lambda,\lambda' \in \Gamma (L_2)$.
\end{definition}

Adopting version 2 of the Dictionary we get essentially the same answer. More precisely, 
if we apply Dictionary 2 to Poisson manifolds, we get \emph{homogeneous Poisson
manifolds}.

\begin{definition}\label{def:HPM}
A \textbf{homogeneous Poisson manifold} is a homogeneous manifold $\widetilde{L}$
equipped with a \textbf{homogeneous Poisson bracket}, i.e. a Poisson bracket
\[
\{-,-\}_{\widetilde{L}}: C^\infty(\widetilde{L}) \times C^\infty(\widetilde{L}) \to C^\infty(\widetilde{L})
\]
which is additionally homogeneous of degree $-1$:
\[
h^\ast_t \{-, -\}_{\widetilde{L}} = t^{-1}\{-, -\}_{\widetilde{L}}, \quad \text{for all } t \in \mathbb{R}^\ast,
\]
or, equivalently, $\{-, -\}_{\widetilde{L}}$ preserves homogeneous functions of degree $1$.
\end{definition}

Notice that the homogeneity condition on $\{-, -\}_{\widetilde{L}}$ in Definition \ref{def:HPM} is actually an
educated guess. For instance it is motivated by the following

\begin{theorem}
The category of Jacobi manifolds and Jacobi maps is equivalent to the
category of homogeneous Poisson manifolds and $\mathbb R^\ast$-equivariant Poisson maps.
\end{theorem}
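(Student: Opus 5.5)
The plan is to build the equivalence on top of Proposition \ref{prop:LBvsHM}, which already identifies line bundles and fiber-wise invertible line bundle maps with principal $\mathbb R^\ast$-bundles and principal bundle maps. It then remains to show that, under $L \mapsto \widetilde L = L^\ast\smallsetminus 0$, Jacobi brackets on $L$ correspond bijectively to homogeneous Poisson brackets on $\widetilde L$, and that Jacobi maps correspond to equivariant Poisson maps.

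\textbf{From a Jacobi bracket to a homogeneous Poisson bracket.} Since $\{-,-\}$ is a derivation in each argument, it is a first order skew bidifferential operator. By Remark \ref{rem:JT} it is therefore the same as a Jacobi tensor $J:\wedge^2\mathsf J^1L\to L$, i.e.\ a section of $\wedge^2\mathsf DL\otimes L^\ast$. Using the correspondence $D\mapsto\widetilde D$ between derivations and degree $0$ vector fields, I would transport $J$ to a bivector $\widetilde J$ on $\widetilde L$. Locally, write $J=\sum D_i\wedge D_j\otimes\epsilon^{ij}$ with $\epsilon^{ij}$ sections of $L^\ast$, which become degree $-1$ functions on $\widetilde L$. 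The resulting $\widetilde J$ is homogeneous of degree $-1$, and it is characterized by the identity
\[
\widetilde J(d\widetilde\lambda,d\widetilde\mu)=\widetilde{\{\lambda,\mu\}}.
\]
This identity, together with the fact that differentials of degree $1$ functions span $T^\ast\widetilde L$ pointwise, shows that $\widetilde J$ is well defined and unique. For the Jacobi identity, note that the Schouten bracket $[\widetilde J,\widetilde J]$ is a degree $-2$ trivector. Evaluated on $d\widetilde\lambda,d\widetilde\mu,d\widetilde\nu$ it returns the Jacobiator of $\{-,-\}$, which vanishes. Since these differentials span $T^\ast\widetilde L$ pointwise, $[\widetilde J,\widetilde J]=0$.

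\textbf{From a homogeneous Poisson bracket to a Jacobi bracket.} A degree $-1$ Poisson bracket maps degree $1$ functions to degree $1$ functions, so it restricts to a Lie bracket on $\Gamma(L)$. It is a derivation in each argument for the following reason: $\widetilde{f\mu}=\pi^\ast f\,\widetilde\mu$, and $\{\widetilde\lambda,\pi^\ast f\}$ is a degree $0$ function, hence $\pi^\ast$ of a function on $M$. The Leibniz rule on $\widetilde L$ then gives $\{\lambda,f\mu\}=X(f)\mu+f\{\lambda,\mu\}$ with $X$ determined by $\lambda$. The two constructions are mutually inverse, again because a homogeneous bivector is determined by its values on degree $1$ functions.

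\textbf{Morphisms.} A fiber-wise invertible $F:L_1\to L_2$ covering $\phi$ corresponds to an equivariant map $\widetilde F:\widetilde L_1\to\widetilde L_2$ with $\widetilde{F^\ast\lambda}=\widetilde F^\ast\widetilde\lambda$. Hence $F$ is Jacobi exactly when $\widetilde F$ preserves brackets of degree $1$ functions. The one real difficulty is showing that this already makes $\widetilde F$ a Poisson map on all of $C^\infty(\widetilde L_2)$. I would argue via bivectors: $\widetilde F_\ast\widetilde J_1-\widetilde J_2$, evaluated pointwise along $\widetilde F$, is a tensor. It vanishes on differentials of degree $1$ functions, and these span each cotangent space, so it vanishes identically. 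This spanning fact is used repeatedly, so I would check it once carefully in local trivializations. In coordinates $(x,t)$ with $\widetilde\lambda=t\,a(x)$, the differentials of $t$ and $t x^i$ span $T^\ast\widetilde L$ wherever $t\neq0$, which is all of $\widetilde L$.
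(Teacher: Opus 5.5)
Your proposal is correct and follows essentially the same route as the paper's sketch: building on Proposition \ref{prop:LBvsHM}, Jacobi brackets on $L$ correspond bijectively to degree $-1$ Poisson brackets on $\widetilde L$ via $\{\widetilde\lambda,\widetilde\mu\}_{\widetilde L}=\widetilde{\{\lambda,\mu\}}$. You actually supply more than the paper does, namely the pointwise spanning of $T^\ast\widetilde L$ by differentials of degree $1$ functions and the treatment of morphisms, which the paper explicitly leaves out.
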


\begin{proof}[Proof (a sketch)]
We will not discuss morphisms. Let $L \to M$ be a line bundle. For
every Jacobi structure $\{-, -\}$ on $L$ there exists a unique Poisson structure $\{-, -\}_{\widetilde L}$ on
$\widetilde{L}$ such that
\[
\{\widetilde{\lambda}, \widetilde{\mu}\}_{\widetilde L} = \widetilde{\{\lambda, \mu\}}, \quad \text{for all } \lambda, \mu \in \Gamma(L).
\]
The Poisson bracket $\{-, -\}_{\widetilde L}$ is homogeneous of degree $-1$, and the
assignment $\{-, -\} \mapsto \{-, -\}_{\widetilde L}$ establishes a bijection between Jacobi structures on $L$
and homogeneous Poisson brackets of degree $-1$ on $\widetilde{L}$.
\end{proof}

The homogeneous Poisson manifold corresponding to a Jacobi manifold is sometimes called the \textbf{Poissonization}.

The original definition of Jacobi manifolds goes back to the works of Kirillov and,
independently, Lichnerowicz in the late 70s \cite{Kir1976,Lic1978}. Despite Jacobi structures and Poisson
structures were introduced essentially at the same time (in their contemporary form),
the mathematical community concentrated much more on Poisson Geometry than on
Jacobi Geometry. In order to illustrate the wide spectrum of applications of
Jacobi Geometry, we now present a list of examples. Contact / Jacobi Geometry has also
applications in time dependent mechanics and mechanics of dissipative systems \cite{Bra2017,Gra2022} but we will
not discuss this.

\begin{example} \label{ex:2.4}
Let $(M, L, \{-, -\})$ be a Jacobi manifold. If $L = \mathbb R_M := M \times \mathbb R \to M$
is the trivial line bundle, then $\Gamma(L) \cong C^\infty(M)$ and the Jacobi bracket $\{-,-\}$ is
necessarily of the form
\begin{equation}\label{eq:JB_tlb}
\{f,g\} = \Lambda(df, dg) + E(f)g - E(g)f
\end{equation}
for some bivector $\Lambda \in \Gamma(\wedge^2 TM)$ and some vector field $E \in \mathfrak{X}(M)$ such that
\begin{equation}\label{eq:JP}
[\Lambda, \Lambda] = 2 E \wedge \Lambda, \quad \text{and} \quad [E, \Lambda] = 0,
\end{equation}
where $[-,-]$ is the Schouten-Nijenhuis bracket of multivectors. Conversely, given a
bivector $\Lambda$ and a vector field $E$ satisfying \eqref{eq:JP}, the bracket given by \eqref{eq:JB_tlb} is a Jacobi
bracket on $\mathbb R_M$.
\end{example}

\begin{example} \label{ex:2.5}
Let $(M, H)$ be a contact manifold, let $(L, \omega)$ be the associated symplectic Atiyah
bundle, and let $\{-, -\}$ be the Jacobi bracket induced by $H$. Then $\{-, -\}$ is
a Jacobi bracket in the sense of Definition \ref{def:Jacobi}. Conversely, let $(L, \{-, -\})$ be a Jacobi
bundle. Exactly as in Section \ref{rem:JT}, the bracket $\{-,-\}$ is equivalent to a Jacobi
tensor:
\[
J : \wedge^2 J^1 L \to L.
\]
It is easy to see that $(L, \{-, -\})$ is the Jacobi bundle of a contact manifold $(M, H)$ if and only if 
$J$ is nondegenerate, in which case $J = \omega^{-1}$ where $\omega$ is the symplectic Atiyah structure
determined by $H$. In particular, the Poissonization of $(M, L, \{-, -\})$ agrees with the Poisson structure associated to the
symplectization of $(M, H)$.
\end{example}

\begin{example} \label{ex:2.6}
Let $(M, \omega, \eta)$ be a \emph{locally conformally symplectic} (\emph{lcs}) \emph{manifold}. Recall
that this means that $\omega$ is a non-degenerate $2$-form on $M$, and $\eta$ is a closed $1$-form such
that $d\omega = \eta \wedge \omega$. The bracket
\begin{equation}\label{eq:lcsB1}
\{f, g\} = \omega^{-1}(df - f\eta, dg - g\eta), \quad f, g \in C^\infty(M),
\end{equation}
is a Jacobi bracket on $\mathbb R_M$. There is a slightly more intrinsic version of this example \cite[Appendix A]{Vit2016}.
Namely let $L \to M$ be a line bundle. A \textbf{lcs structure} on $L$ is a pair $(\omega, \nabla)$ consisting
of a non-degenerate $L$-valued 2-form $\omega \in \Omega^2(M, L)$ and a flat connection $\nabla$ in $L$, such
that $d^\nabla \omega = 0$, where $d^\nabla$ is the connection differential. The bracket
\begin{equation}\label{eq:lcsB2}
\{\lambda, \mu\} = \omega^{-1}(d^\nabla \lambda, d^\nabla \mu), \quad \lambda, \mu \in \Gamma(L),
\end{equation}
is a Jacobi bracket on $L$. When $L = \mathbb R_M$, then $\omega$ is an ordinary 2-form and $\nabla$ is
completely determined by its connection 1-form $\eta$. Actually lcs structures on $\mathbb R_M$ are
the same as ordinary lcs structures, and in this case, Formulas \eqref{eq:lcsB1} and \eqref{eq:lcsB2} agree.
\end{example}

\begin{example} \label{ex:2.7}
Let $\mathfrak{g}$ be a (real, finite dimensional) Lie algebra. The dual $\mathfrak{g}^\ast$ is equipped
with the usual Lie-Poisson structure $\{-,-\}_{\mathfrak{g}^\ast}$. The slit dual $\widetilde L := \mathfrak{g}^\ast \smallsetminus 0$ is actually
a homogeneous Poisson manifold. Namely, the usual scalar multiplication by non-zero reals gives to $\widetilde L$ the structure of a principal $\mathbb{R}^\ast$-bundle over the projectivized
dual $P(\mathfrak{g}^\ast)$, and the restricted Lie-Poisson structure is homogeneous of degree $-1$.
Notice that the line bundle corresponding to the homogeneous manifold $\widetilde L$ is the dual
$\mathcal{O}(-1) \to P(\mathfrak{g}^\ast)$ of the tautological bundle on the projective space $P(\mathfrak{g}^\ast)$. We conclude that the dual of the tautological bundle on the projectivized dual of any (real, finite
dimensional) Lie algebra is a Jacobi bundle.
\end{example}

\begin{example} \label{ex:2.8}
Let $\mathfrak{g}$ be a (real, finite dimensional) Lie algebra, and let $\varphi \in \mathfrak{g}^\ast$ be a
$1$-cocycle, i.e. $\varphi$ vanishes on the commutator subalgebra $[\mathfrak{g}, \mathfrak{g}] \subseteq \mathfrak{g}$. Then $\mathfrak{g}^\ast$ can be given the structure of a Jacobi manifold with $L = \mathbb R_{\mathfrak{g}^\ast}$, the trivial line bundle, and the
Jacobi bracket
\[
\{-,-\} : C^\infty(\mathfrak{g}^\ast) \times C^\infty(\mathfrak{g}^\ast) \to C^\infty(\mathfrak{g}^\ast)
\]
uniquely determined by the following conditions:
\begin{align*}
\{\ell_v, \ell_w\} &= \ell_{[v,w]}, \\
\{\ell_v, c\} &= \varphi(v)c, \\
\{c, c'\} &= 0,
\end{align*}
for all $v, w \in \mathfrak{g}$, and $c, c'$ constant, where, for $v \in \mathfrak{g}$, we denoted by $\ell_v$ the corresponding
linear function on $\mathfrak{g}^\ast$.
\end{example}

The latter two examples generalize to Lie algebroids.

\begin{example} \label{ex:2.9}
$b$-contact structures are mildly singular contact structures and were
recently considered by Miranda and Oms \cite{Mir2023} (see also \cite{Car2022}). Specifically, a $b$-contact manifold is a $(2n+1)$-dimensional Jacobi manifold $(M, L, \{-, - \})$ such that the Jacobi tensor $J$ is everywhere
non-degenerate, except for a hypersurface $Z \subseteq M$ where it satisfies a certain transversality condition. In particular, $M \smallsetminus Z$ is a contact manifold (but the contact structure has a ``singularity'' along $Z$). 
\end{example}

\section{\label{sec:oddcomplex}Odd Complex Structures}

Interestingly, the Symplectic-to-Contact Dictionary can also be applied to situations apparently very far from Symplectic Geometry. In this section we apply it to
complex manifolds. We begin adopting Dictionary 1. In this case we get the following

\begin{definition} \label{def:2.10}
A \textbf{complex Atiyah bundle} is a (real) line bundle $L \to M$ equipped
with a \textbf{complex Atiyah structure}, i.e. a fiber-wise complex structure
\[
K : \mathsf D L \to \mathsf D L
\]
on the gauge algebroid $\mathsf D L \Rightarrow M$ satisfying an additional integrability condition, namely
the \textbf{Lie algebroid Nijenhuis torsion} $N_K : \wedge^2 \mathsf D L \to \mathsf D L$ of $K$, vanishes. Here
\[
N_K(D_1, D_2) := [KD_1, KD_2] - [D_1, D_2] - K[KD_1, D_2] - K[D_1, KD_2],
\]
for all $D_1, D_2 \in \Gamma(\mathsf D L)$. A manifold equipped with a complex Atiyah bundle is an \textbf{odd complex manifold}.
\end{definition}

Adopting Dictionary 2 we get essentially the same answer. More precisely, if we apply
Dictionary 2 to complex manifolds we get \emph{homogeneous complex manifolds}.

\begin{definition} \label{def:2.11}
A \textbf{homogeneous complex manifold} is a homogeneous manifold $\widetilde{L}$
equipped with a \textbf{homogeneous complex structure}, i.e. an ordinary complex structure $\widetilde{K} : T\widetilde{L} \to T\widetilde{L}$ which is additionally homogeneous of degree 0:
\[
h_t^\ast \widetilde{K} = \widetilde{K}, \quad \text{for all } t \in \mathbb{R}^\ast.
\]
\end{definition}

The terminology ``odd complex manifold'' is motivated by the fact that odd complex
manifolds are necessarily odd dimensional. They can be seen as the odd dimensional
analogues of complex manifolds (whose real dimension is necessarily even). We have
the following

\begin{theorem} \label{thm:2.12}
The category of odd complex manifolds with odd holomorphic maps is
equivalent to the category of homogeneous complex manifolds with $\mathbb R^\ast$-equivariant holomorphic maps.
\end{theorem}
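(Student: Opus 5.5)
The plan is to reuse the correspondence $D \mapsto \widetilde D$ of Section \ref{sec:homogeneous} between derivations of $L$ and degree-$0$ homogeneous vector fields on $\widetilde L$, and to upgrade it from sections to vector bundles. First I will check that $D \mapsto \widetilde D$ is $C^\infty(M)$-linear, with $\widetilde{fD} = (\pi^\ast f)\widetilde D$, and that it preserves brackets, $\widetilde{[D_1,D_2]} = [\widetilde D_1,\widetilde D_2]$. Both follow from $\widetilde{D(\lambda)} = \widetilde D\widetilde\lambda$ and the fact that degree-$1$ functions $\widetilde\lambda$, together with pullbacks $\pi^\ast f$, determine vector fields on $\widetilde L$ (their differentials span $T^\ast\widetilde L$ pointwise). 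Consequently there is a vector bundle isomorphism $T\widetilde L \cong \pi^\ast \mathsf D L$, and equivalently $T\widetilde L/\mathbb R^\ast \cong \mathsf D L$ (the Atiyah sequence of the principal bundle). Under this isomorphism degree-$0$ homogeneous vector fields correspond to sections of $\mathsf D L$, and the Lie brackets match.

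Next I will transfer endomorphisms. A fiber-wise endomorphism $K$ of $\mathsf D L$ defines $\widetilde K := \pi^\ast K$ on $T\widetilde L$ via $\widetilde K \widetilde D := \widetilde{KD}$, extended $C^\infty(\widetilde L)$-linearly. This makes sense because the $\widetilde D$ span $T\widetilde L$ pointwise and $K$ is $C^\infty(M)$-linear. By construction $\widetilde K$ is $\mathbb R^\ast$-invariant, i.e.\ homogeneous of degree $0$. Conversely, an invariant $\widetilde K$ maps invariant vector fields to invariant vector fields, so it descends to some $K$ on $\mathsf D L$. The conditions $K^2 = -\mathrm{id}$ and $\widetilde K^2 = -\mathrm{id}$ are then equivalent. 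Since the bracket is preserved, $\widetilde{N_K(D_1,D_2)} = N_{\widetilde K}(\widetilde D_1,\widetilde D_2)$. Because $N_{\widetilde K}$ is a tensor and the $\widetilde D$ span $T\widetilde L$, we get $N_K = 0$ if and only if $N_{\widetilde K} = 0$. By Newlander–Nirenberg, the latter says that $\widetilde K$ is an honest complex structure. This gives the bijection on objects.

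For morphisms, Proposition \ref{prop:LBvsHM} already matches fiber-wise invertible line bundle maps $F\colon L_1 \to L_2$ with equivariant maps $\widetilde F\colon \widetilde L_1 \to \widetilde L_2$. I will take "odd holomorphic" to mean that the induced map $\mathsf D F$, defined on $F$-related derivations, intertwines $K_1$ and $K_2$. I then check that $d\widetilde F$ corresponds to $\mathsf D F$ under $T\widetilde L \cong \pi^\ast\mathsf D L$, so the intertwining conditions agree, and functoriality is inherited from Proposition \ref{prop:LBvsHM}.

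The main obstacle is exactly this morphism step, together with the pointwise spanning argument. For a non-diffeomorphic base map, $\mathsf D F$ is only defined through pullback Lie algebroid technology, so one has to be careful about what "odd holomorphic" means. I would first try to define it directly through $\widetilde F$ and $F$-relatedness of derivations. The rest is routine once $T\widetilde L \cong \pi^\ast \mathsf D L$ is established as a bracket-compatible isomorphism. That isomorphism is most cleanly proved in local trivializations $L|_U \cong U \times \mathbb R$, where $\widetilde L|_U \cong U \times \mathbb R^\ast$, derivations take the form $X + g\,\mathbb I$, and $\widetilde D = X + g\, t\partial_t$.
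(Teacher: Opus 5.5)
Your proposal is correct and follows essentially the same route as the paper. The paper's sketch also defines $\widetilde K$ by $\widetilde{KD} = \widetilde K\widetilde D$, notes that it is homogeneous of degree $0$, and asserts a bijection between complex Atiyah structures on $L$ and degree-$0$ homogeneous complex structures on $\widetilde L$; you fill in the identification $T\widetilde L\cong\pi^\ast\mathsf D L$, the matching of Nijenhuis torsions, and the morphisms, which the paper leaves implicit (and your morphism worry is milder than you fear, since $(\mathsf D F)(\Delta)(\lambda) := F(\Delta(F^\ast\lambda))$ already gives a pointwise bundle map covering the base map that corresponds to $d\widetilde F$, so no pullback-algebroid machinery is needed).
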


\begin{proof}[Proof (a sketch)]
Let $L \to M$ be a line bundle. For every complex Atiyah structure $K$
on $L$ there exists a unique complex structure $\widetilde{K}$ on $\widetilde{L}$ such that
\[
\widetilde{K D} = \widetilde{K} \widetilde D, \quad \text{for all } D \in \Gamma(\mathsf D L).
\]
The complex structure $\widetilde{K}$ is homogeneous of degree $0$, and the
assignment $K \mapsto \widetilde{K}$ establishes a bijection between complex Atiyah structures on $L$ and
homogeneous complex structures of degree $0$ on $\widetilde L$.
\end{proof}

The homogeneous complex manifold $(\widetilde{L}, \widetilde{K})$ corresponding to an odd complex manifold
$(M, L, K)$ will be called the \emph{complex cone} of $(M, L, K)$.

\begin{example} \label{ex:2.13}
Consider the cylinder $M = \mathbb{R} \times \mathbb{C}^n$. The trivial real line bundle
$\mathbb R_M$ is canonically equipped with a complex Atiyah structure. Namely, denote by
$(z^i = x^i + \sqrt{-1}y^i)$ the standard complex coordinates on $\mathbb{C}^n$ and by $u$ the standard
coordinate on the $\mathbb{R}$ factor of $M$. The module $\mathsf D \mathbb R_M$ is spanned by the identity derivation
$\mathbb{I}$ and the ordinary partial derivatives
\[
\frac{\partial}{\partial u}, \ldots, \frac{\partial}{\partial x^i}, \ldots, \frac{\partial}{\partial y^i}, \ldots
\]
and there is a unique complex Atiyah structure $K_{can} : \mathsf D\mathbb R_M \to\mathsf  D \mathbb R_M$ on $\mathbb R_M$ such that
\[
K_{can} \mathbb{I} = \frac{\partial}{\partial u} \quad \text{and} \quad K_{can} \frac{\partial}{\partial x^i} = \frac{\partial}{\partial y^i}, \quad i = 1, \ldots, n.
\]
The complex cone of $(M, \mathbb R_M, K_{can})$ is the open subset $\widetilde{M} : w \neq 0$ in the complex
manifold $\mathbb{C} \times \mathbb{C}^n$ with complex coordinates $(w = u + \sqrt{-1}v, z^i)$. The homogeneity
structure on $\widetilde{M}$ is defined by $h_t(u, v, z^i) := (u, tv, z^i)$.
\end{example}

\begin{theorem}[Odd Newlander-Nirenberg {\cite[Appendix A]{Sch2020}}] \label{thm:2.14}
Every odd complex manifold is locally of
the form $(M = \mathbb{R} \times \mathbb{C}^n, \mathbb{R}_M, K_{can})$.
\end{theorem}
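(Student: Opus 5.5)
I will deduce the statement from the ordinary Newlander--Nirenberg theorem, using Theorem \ref{thm:2.12} to pass to the complex cone. The main extra ingredient is an \emph{equivariant} holomorphic chart.

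\textbf{Setup.} Let $(M,L,K)$ be an odd complex manifold and fix $x_0\in M$. Since the statement is local, I shrink $M$ so that $L$ is trivial. Then $\widetilde L\cong M\times\mathbb R^\ast$ with $h_t(x,s)=(x,ts)$. Under the dictionary, the identity derivation $\mathbb I$ corresponds to the Euler vector field $\mathcal E=s\partial_s$ of the homogeneity structure. The vector field $\widetilde{K\mathbb I}=\widetilde K\mathcal E$ is homogeneous of degree $0$, and it is nowhere vanishing and linearly independent of $\mathcal E$, since $K$ is a fiberwise complex structure.

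\textbf{Holomorphic vector field.} I will show that $Z:=\mathcal E-\sqrt{-1}\,\widetilde K\mathcal E$ is a holomorphic vector field on the complex manifold $(\widetilde L,\widetilde K)$. Equivalently, $\mathcal L_{\mathcal E}\widetilde K=0$, which holds because $\widetilde K$ is homogeneous of degree $0$. For an integrable $\widetilde K$, this is exactly the condition for the real part of $Z$ to be real holomorphic. Hence $\widetilde K\mathcal E$ is holomorphic too, and $[\mathcal E,\widetilde K\mathcal E]=0$.

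\textbf{Chart.} By the Newlander--Nirenberg theorem $\widetilde K$ is integrable. Near a point $p_0$ over $x_0$, the holomorphic flow-box theorem gives complex coordinates $(w,z^1,\dots,z^n)$ with $Z=\partial_w$. Setting $w=u+\sqrt{-1}v$, this means $\partial_u=\mathcal E$ and $\partial_v=-\widetilde K\mathcal E$ (or $+$, after a sign convention). The functions $z^i$ are holomorphic and satisfy $\mathcal E z^i=0$, so they are $\mathbb R^\ast$-invariant near $p_0$. I then want to reshape the coordinates so that the $\mathbb R^\ast$-action reads $(u,v,z)\mapsto(u,tv,z)$, as in Example \ref{ex:2.13}. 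The key observation is that $\mathcal E$ generates the multiplicative action: $h_{e^\tau}$ is the flow of $\mathcal E$. So I will replace $w$ by $\exp(w)$, or by a suitable rotated version such as $\sqrt{-1}e^{w}$ chosen so that $\mathcal E$ becomes $v'\partial_{v'}$ for the new coordinate $w'=u'+\sqrt{-1}v'$. Along the way I must check that the new coordinates are defined on a full $\mathbb R^\ast$-saturated neighborhood, and handle the component $t<0$ using equivariance.

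\textbf{Descending to $M$.} Invariant holomorphic coordinates in which the action is $h_t(u',v',z)=(u',tv',z)$ identify $\widetilde L$, near the fiber over $x_0$, with an equivariant open piece of the complex cone of Example \ref{ex:2.13}. By the equivalence of categories in Theorem \ref{thm:2.12}, equivariant biholomorphisms descend to isomorphisms of complex Atiyah bundles. This gives the local model $(\mathbb R\times\mathbb C^n,\mathbb R_M,K_{can})$ after shrinking.

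\textbf{Main obstacle.} The hard step is the equivariant chart. The flow-box coordinates are only local in the $\mathbb R^\ast$-direction, and they must be extended consistently along the whole orbit, which has two components. I would first construct the chart on the $t>0$ half using the flow of $\mathcal E$, which is complete on orbits and exactly matches the multiplicative action. Then I would transport it by $h_{-1}$ to the $t<0$ half.
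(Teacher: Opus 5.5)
Your overall route is the right one: pass to the complex cone, note that $\mathcal E$ is real holomorphic because $\widetilde K$ has degree $0$, straighten the nowhere vanishing holomorphic field $Z=\mathcal E-\sqrt{-1}\,\widetilde K\mathcal E$, and descend. Two small corrections. First, integrability of $\widetilde K$ does not come from Newlander--Nirenberg; it is equivalent to $N_K=0$, and Newlander--Nirenberg is what supplies the holomorphic charts. Second, normalizing $Z=2\partial_w$ gives exactly $\mathcal E=\partial_u$ and $\widetilde K\mathcal E=+\partial_v$.

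The genuine gap is the reshaping step: the normal form you aim for does not exist. Each $h_t$ preserves $\widetilde K$, so it is a biholomorphism of $(\widetilde L,\widetilde K)$. But $(u',v')\mapsto(u',tv')$ is not holomorphic in $w'=u'+\sqrt{-1}v'$ for $t\neq1$; for $t=-1$ it is complex conjugation. Infinitesimally, $\mathcal E$ is real holomorphic while $v'\partial_{v'}$ is not, since its $(1,0)$-part is $\tfrac12(w'-\bar w')\partial_{w'}$. In fact $w'=e^{w}$ turns $\mathcal E=\partial_u$ into the radial field $u'\partial_{u'}+v'\partial_{v'}$, so $h_t$ becomes $w'\mapsto tw'$ (this is actually the correct model, see below). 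Multiplying $w'$ by $\sqrt{-1}$ does not change that. So your descent paragraph, which presupposes $h_t(u',v',z)=(u',tv',z)$, does not go through. The formula for $h_t$ in Example~\ref{ex:2.13} cannot be taken literally, for the same reason.

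The repair is to drop the reshaping, because your flow-box chart already is the model chart. Consider the cone $\mathbb R\times\mathbb C^n\times\mathbb R^\ast$ of $(\mathbb R\times\mathbb C^n,\mathbb R_M,K_{can})$. Let $\phi$ be the coordinate on the $\mathbb R$-factor (called $u$ in Example~\ref{ex:2.13}) and $s$ the fibre coordinate with $\widetilde 1=s$. Then $\mathcal E=s\partial_s$ and $\widetilde K_{can}\mathcal E=\partial_\phi$, and the functions $\log|s|+\sqrt{-1}\phi$, $z^i$ are holomorphic. Equivalently $se^{\sqrt{-1}\phi}$, $z^i$ are holomorphic, and on them $h_t$ is real scalar multiplication. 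So your $u$ and $v$ play the roles of $\log|s|$ and $\phi$.

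To conclude, shrink the chart domain to a product neighbourhood $\{h_{e^\tau}(\sigma(x)): x\in V,\ |\tau|<\epsilon\}$ of $p_0$, with $\sigma$ a local section of $\widetilde L$. There $v$, $x^i$, $y^i$ (where $z^i=x^i+\sqrt{-1}y^i$) are annihilated by $\mathcal E=\partial_u$, so they are pull-backs of a chart on $V\ni x_0$. Also $\mathcal E e^u=e^u$, so $e^u$ is the restriction of $\widetilde\lambda$ for a nowhere vanishing $\lambda\in\Gamma(L|_V)$. Trivialize $L|_V$ by $\lambda$. On the product neighbourhood, the lifts of $\mathbb I,\partial_v,\partial_{x^i},\partial_{y^i}$ are the coordinate fields $\partial_u,\partial_v,\partial_{x^i},\partial_{y^i}$ of the chart $(u,v,x^i,y^i)$. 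Holomorphicity of $(u+\sqrt{-1}v,z^i)$ then reads $K\mathbb I=\partial_v$ and $K\partial_{x^i}=\partial_{y^i}$, i.e.\ $K=K_{can}$ with $v$ as the $\mathbb R$-coordinate.

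Finally, $T_p\widetilde L\cong\mathsf D_{\pi(p)}L$ pointwise and everything involved has degree $0$, so this local slice suffices. Your ``main obstacle'' of extending an equivariant chart along whole orbits, including the component $t<0$, never arises.
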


Odd complex manifolds have already appeared in the Metric Contact Geometry
literature under a different name: \emph{normal almost contact manifolds} \cite{Bla2010} (beware that the
terminology \emph{almost contact structure} might refer to very different objects in different
papers). Recall that an \textbf{almost contact manifold} is a manifold $M$ equipped with an
\textbf{almost contact structure}, i.e. a triple $(\Phi, \xi, \eta)$ consisting of a $(1, 1)$-tensor $\Phi : TM \to TM$,
a vector field $\xi \in \mathfrak{X}(M)$, and a 1-form $\eta \in \Omega^1(M)$ such that
\[
\Phi^2 = -1 + \eta \otimes \xi, \quad \Phi(\xi) = 0, \quad \Phi^\ast(\eta) = 0, \quad \text{and} \quad \eta(\xi) = 1.
\]
Almost contact manifolds have been introduced in the literature as odd dimensional
analogues of almost complex manifolds. In fact, an almost contact structure $(\Phi, \xi, \eta)$
defines a fiber-wise complex structure $K_{(\Phi, \xi, \eta)}$ on the gauge algebroid $\mathsf D \mathbb R_M$ of the trivial
line bundle as follows. A derivation of $\mathbb R_M$ can be seen as a pair $(X, r)$ where $X$ is a
vector field, and $r$ is a smooth function on $M$. We put
\[
K_{(\Phi, \xi, \eta)} (X, r) = \big(\Phi(X) - r\xi, \eta(X)\big).
\]
The fiber-wise complex structure $K_{(\Phi, \xi, \eta)}$ is a complex Atiyah structure (i.e. its Lie algebroid
Nijenhuis torsion vanishes) if and only if $(\Phi, \xi, \eta)$ is a \emph{normal almost contact structure}, i.e.
\[
N_\Phi + d\eta \otimes \xi = 0, \quad d\eta(\Phi-, -) + d\eta(-, \Phi-) = 0, \quad \mathcal L_\xi \Phi = 0, \quad \text{and} \quad \mathcal L_\xi \eta = 0.
\]
Moreover, every odd complex manifold $(M, L, K)$ is locally of this form, up to line bundle isomorphisms. We believe that odd complex manifolds are conceptually more appropriate (and much simpler) than normal
almost contact manifolds as odd dimensional analogues of complex manifolds.

\section{\label{sec:hgs}Homogeneous $G$-Structures}

The example in the previous section shows that
various kinds of geometric structures can be translated via the Symplectic-to-Contact Dictionary (either version
1 or 2). It is then natural to wonder whether or not the dictionary can be applied
to a generic $G$-structure. This question has been answered in \cite{Tor2020} which is our reference for this section. It turns out that, to work with $G$-structures, the second version of the
Dictionary is more appropriate. The final outcome is, roughly, that, for any $G$, there
is a family of homogeneity conditions that we can impose on a $G$-structure, each one
providing a different dictionary \cite{Tor2020}. Our plan, in this section, is as follows.
We begin analyzing the case $G = \mathrm{Sp}(q, \mathbb R)$, the symplectic group. In this case, a $G$-structure
is an almost symplectic structure. But a homogeneous symplectic manifold is essentially the same
as a contact manifold. We express the homogeneity property purely in the language of
$G$-structures, and then axiomatize to get a general definition valid for every $G$.

Let $G \subseteq \mathrm{GL}(n, \mathbb R)$ be a Lie subgroup of the general linear group and let $N$ be an
$n$-dimensional manifold. Recall that a \textbf{$G$-structure} on $N$ is a \textbf{$G$-reduction} of the frame
bundle $\mathrm{Fr}(N) \to N$, i.e. a $G$-invariant subbundle $P \subseteq \mathrm{Fr}(N)$ such that $P \to N$, with
the restricted $G$-action, is a principal $G$-bundle. All our actions will be from the right, a
point in $\mathrm{Fr}(N)$ over a point $x \in N$ is a linear isomorphism $\mathbb{R}^n \to T_x N$ and $\mathrm{GL}(n, \mathbb{R})$ acts on $\mathrm{Fr}(N)$ by pre-composition: for all $g \in GL(n, \mathbb{R})$ and all frames $\psi : \mathbb{R}^n \to T_x N$, the
action $\psi \cdot g$ is the frame $\psi \circ g$, where we interpret $g$ as a linear isomorphism $g : \mathbb{R}^n \to \mathbb{R}^n$.
Now, let $n = 2k$, and let $G = \mathrm{Sp} (k, \mathbb R) \subseteq GL(n, \mathbb{R})$ be the symplectic group. In this case a $G$-structure $P \subseteq \mathrm{Fr}(N)$ is always the space of symplectic frames of an \emph{almost symplectic
structure} on $N$ (i.e.~a non-degenerate $2$-form). Conversely, the space of symplectic frames of any almost symplectic structure is always an
$\mathrm{Sp} (k, \mathbb R)$-structure. Things go through in a similar way for almost complex structures (in which case $G$ is the complex group), Riemannian structures ($G$ is the orthogonal group), etc.

Let's go back to the case $G = \mathrm{Sp} (k, \mathbb R)$. Assume that $N = \widetilde L$ is a homogeneous manifold
over a manifold $M$ (of dimension $2k-1$), and let $\widetilde{\omega}$ be a homogeneous symplectic
structure on $N$. We know that $\widetilde{\omega}$ corresponds to a contact structure $H$ on $M$. Fix a
point $x \in M$ and choose Darboux coordinates $(x^i, u, p_i)$ for $H$. The section
\[
\lambda := \frac{\partial}{\partial u} \mathbin{\mathrm{mod}} H \in \Gamma(L)
\]
of the normal bundle $L = TM/H$ is a local generator. It is now easy to see that
\[
\left(x^i, - \widetilde \lambda p_i, u, \widetilde \lambda\right)
\]
are Darboux coordinates for $\widetilde{\omega}$, hence
\begin{equation}\label{eq:Psi}
\Psi = \left( \frac{\partial}{\partial x^i}, - \frac{\partial}{\partial \widetilde \lambda p_i}, \frac{\partial}{\partial u}, \frac{\partial}{\partial \widetilde \lambda} \right)
\end{equation} 
is a section of the $\mathrm{Sp} (k, \mathbb R)$-structure $P$ corresponding to $\widetilde{\omega}$. This section satisfies the
following homogeneity property: the diagram
\begin{center}
\begin{tikzcd}
\mathbb{R}^{2k+2} \arrow{r}{\Psi_\varepsilon} \arrow[d, "A(t)"'] & T_\varepsilon \widetilde{L} \arrow{d}{d_\varepsilon h_t} \\
\mathbb{R}^{2k+2} \arrow{r}{\Psi_{t\varepsilon}} & T_{t\varepsilon} \widetilde{L}
\end{tikzcd}
\end{center}
commutes for all $\varepsilon \in \widetilde{L}$ and all $t \in \mathbb{R}^*$, where
\begin{equation}\label{eq:A}
A(t) = \begin{pmatrix} t\mathbb{I} & 0 \\ 0 & \mathbb{I} \end{pmatrix}.
\end{equation}
It is straightforward to check that the map $A : \mathbb{R}^\ast \to \mathrm{GL} (n, \mathbb R)$ takes values in the normalizer
of $\mathrm{Sp} (k, \mathbb R)$ in $\mathrm{GL} (n, \mathbb R)$, and the $\mathrm{Sp} (k, \mathbb R)$-structure $P$ is preserved by the $\mathbb{R}^\ast$-action $h^A$ on $\mathrm{Fr}(\widetilde{L})$
given by
\[
h^A : \mathbb{R}^\ast \times \mathrm{Fr}(\widetilde{L}) \to \mathrm{Fr}(\widetilde{L}), \quad h^A_t(\psi) := d h_t \circ \psi \circ A(t)^{-1}.
\]
Our strategy is now to adopt the last observations as axioms. So, let $\widetilde{L}$ be a homogeneous
manifold (of dimension $n$), and let $\Psi$ be a section of the frame bundle $\mathrm{Fr}(\widetilde{L})$. It is clear
that, for every $t \in \mathbb{R}^\ast$ and $\varepsilon \in \widetilde{L}$, there exists a unique invertible matrix $A_\Psi (t, \varepsilon) \in \mathrm{GL} (n, \mathbb R)$
such that
\[
d_\varepsilon h_t \circ \Psi_\varepsilon = \Psi_{t\varepsilon} \circ A_\Psi(t, \varepsilon).
\]
\begin{propdef}\label{def:2.15}
A section $\Psi$ of the frame bundle $\mathrm{Fr}(\widetilde{L})$ is \textbf{homogeneous} if the matrix valued map $A_\Psi = A_\Psi(t, \varepsilon)$ is independent of $\varepsilon$. In this case $A_\Psi : \mathbb{R}^\ast \to \mathrm{GL} (n, \mathbb R)$
is a Lie group homomorphism, called the \textbf{degree} of $\Psi$.
\end{propdef}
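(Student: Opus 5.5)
The claim to prove is the last sentence of the Proposition/Definition: if $A_\Psi(t,\varepsilon)$ does not depend on $\varepsilon$, then $A_\Psi:\mathbb R^\ast\to\mathrm{GL}(n,\mathbb R)$ is a Lie group homomorphism. The plan is to derive the homomorphism property directly from the fact that $h$ is an action and from the defining identity
\[
d_\varepsilon h_t \circ \Psi_\varepsilon = \Psi_{t\varepsilon} \circ A_\Psi(t,\varepsilon),
\]
and then to establish smoothness separately.

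\textbf{Algebraic part.} Take $s,t\in\mathbb R^\ast$ and $\varepsilon\in\widetilde L$. Since $h_s\circ h_t=h_{st}$, the chain rule gives $d_\varepsilon h_{st}=d_{t\varepsilon}h_s\circ d_\varepsilon h_t$. Compose both sides with $\Psi_\varepsilon$ and apply the defining identity twice:
\[
d_{t\varepsilon}h_s\circ d_\varepsilon h_t\circ\Psi_\varepsilon=d_{t\varepsilon}h_s\circ\Psi_{t\varepsilon}\circ A_\Psi(t)=\Psi_{st\varepsilon}\circ A_\Psi(s)A_\Psi(t).
\]
The same expression also equals $\Psi_{st\varepsilon}\circ A_\Psi(st)$. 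Because $\Psi_{st\varepsilon}$ is a linear isomorphism, it can be cancelled, which yields $A_\Psi(st)=A_\Psi(s)A_\Psi(t)$. For the unit, note that $h_1=\mathrm{id}$ forces $\Psi_\varepsilon=\Psi_\varepsilon\circ A_\Psi(1)$, hence $A_\Psi(1)=\mathbb I$. Invertibility of each $A_\Psi(t)$ is already built into the definition.

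\textbf{Smoothness.} This is the only step that needs some care, although I expect it to be mild. Write $A_\Psi(t,\varepsilon)=\Psi_{t\varepsilon}^{-1}\circ d_\varepsilon h_t\circ\Psi_\varepsilon$. Here $\Psi$ is a smooth section of the frame bundle, $h$ is a smooth action, and inversion in $\mathrm{GL}(n,\mathbb R)$ is smooth. In a local trivialization of $T\widetilde L$ near $\varepsilon$ and $t\varepsilon$, this expression is a smooth matrix-valued function of $(t,\varepsilon)$. Fixing any point $\varepsilon_0$, the map $t\mapsto A_\Psi(t)=A_\Psi(t,\varepsilon_0)$ is therefore smooth, so it is a Lie group homomorphism.

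As an alternative route to regularity, one could use the classical fact that a continuous group homomorphism between Lie groups is automatically smooth, in which case continuity suffices.
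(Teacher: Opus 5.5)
Your proposal is correct: the cocycle identity $A_\Psi(st,\varepsilon)=A_\Psi(s,t\varepsilon)\,A_\Psi(t,\varepsilon)$ follows from $h_{st}=h_s\circ h_t$ and the chain rule, and under $\varepsilon$-independence it reduces to $A_\Psi(st)=A_\Psi(s)A_\Psi(t)$, while smoothness follows from $A_\Psi(t,\varepsilon)=\Psi_{t\varepsilon}^{-1}\circ d_\varepsilon h_t\circ\Psi_\varepsilon$. The paper omits this proof as routine, and your argument is the standard verification it leaves to the reader.
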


\begin{example} \label{ex:2.16}
The coordinate frame \eqref{eq:Psi} is homogeneous of degree $A$ given by \eqref{eq:A}.
\end{example}

\begin{propdef}\label{def:2.17}
Let $P \subseteq \mathrm{Fr}(\widetilde{L})$ be a $G$-structure on a homogeneous
manifold $\widetilde{L}$ over a connected manifold $M$. The following conditions are equivalent:
\begin{enumerate}
    \item locally, around every point, there exists a homogeneous section $\Psi_0 \in \Gamma(P)$ such
    that $A_{\Psi_0} : \mathbb{R}^\ast \to  \mathrm{GL} (n, \mathbb R)$ takes values in the normalizer $N(G) \subseteq  \mathrm{GL} (n, \mathbb R)$ of $G$ in $ \mathrm{GL} (n, \mathbb R)$;
    \item locally, around every point, there exists a homogeneous section $\Psi_0 \in \Gamma(P)$ such
    that $P$ is preserved under the induced action $h^{A_{\Psi_0}}$ of $\mathbb{R}^\ast$ on $\mathrm{Fr}(\widetilde{L})$;
    \item locally, around every point, there exists a homogeneous section $\Psi \in \Gamma(P)$ and,
    for all such $\Psi$, the homomorphism $A_\Psi : \mathbb{R}^\ast \to  \mathrm{GL} (n, \mathbb R)$ takes values in $N(G)$;
    \item locally, around every point, there exists a homogeneous section $\Psi \in \Gamma(P)$ and,
    for all such $\Psi$, the $G$-structure $P$ is preserved under $h^{A_\Psi}$.
\end{enumerate}
If one, hence all, of the above conditions is satisfied, we say that $P$ is a \textbf{homogeneous
$G$-structure}. In this case the composition
\[
\alpha: \mathbb{R}^\ast \xrightarrow{A_\Psi} N(G) \to N(G)/G
\]
is independent of the local homogeneous section $\Psi \in \Gamma(P)$ and it is called the \textbf{degree}
of $P$. We also say that $P$ is \textbf{$\alpha$-homogeneous}.
\end{propdef}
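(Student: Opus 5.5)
The plan is to reduce everything to one transformation formula comparing the degrees of two homogeneous local sections of $P$. Throughout, ``locally'' means over $\pi^{-1}(U)$ for $U \subseteq M$ open. These sets are $h$-invariant, so homogeneity of a section defined there makes sense. I also use Proposition/Definition \ref{def:2.15}: $A_\Psi$ is a Lie group homomorphism, so in particular $A_\Psi(t^{-1}) = A_\Psi(t)^{-1}$.

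\emph{Step 1 (transformation formula).} Let $\Psi, \Psi'$ be homogeneous sections of $P$ over the same $\pi^{-1}(U)$. Since $P$ is a principal $G$-bundle, $\Psi' = \Psi \cdot g$ for a smooth $g \colon \pi^{-1}(U) \to G$. Applying $d_\varepsilon h_t$ and using the definition of $A_\Psi$ gives
\[
A_{\Psi'}(t) = g(t\varepsilon)^{-1} A_\Psi(t)\, g(\varepsilon) \quad \text{for all } \varepsilon,\ t.
\]
Since $G \subseteq N(G)$, a matrix of the form $g_1 A g_2$ with $g_1, g_2 \in G$ lies in $N(G)$ if and only if $A$ does. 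Moreover, when $A \in N(G)$, the cosets agree:
\[
g_1 A g_2 G = g_1 A G = A\,(A^{-1} g_1 A)\, G = A G .
\]
Hence $A_\Psi$ takes values in $N(G)$ if and only if $A_{\Psi'}$ does, and in that case $A_\Psi$ and $A_{\Psi'}$ induce the same map $\mathbb{R}^\ast \to N(G)/G$. This immediately gives (1)$\Leftrightarrow$(3). The direction (3)$\Rightarrow$(1) is trivial. For (1)$\Rightarrow$(3), any other homogeneous section over the same (shrunken) domain differs from $\Psi_0$ by some $g$, and the formula applies.

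\emph{Step 2 ((1)$\Leftrightarrow$(2), and likewise (3)$\Leftrightarrow$(4)).} Fix a homogeneous $\Psi_0$ with $A = A_{\Psi_0}$. Every frame of $P$ over $\pi^{-1}(U)$ has the form $\Psi_0(\varepsilon)\, g$ with $g \in G$. Then
\[
h^A_t\big(\Psi_0(\varepsilon) g\big) = d h_t \circ \Psi_0(\varepsilon) \circ g A(t)^{-1} = \Psi_0(t\varepsilon)\, A(t)\, g\, A(t)^{-1}.
\]
This lies in $P$ if and only if $A(t) g A(t)^{-1} \in G$. So $h^A$ maps $P$ into $P$ exactly when $A(t) G A(t)^{-1} \subseteq G$ for all $t$. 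Using $A(t^{-1}) = A(t)^{-1}$, this is equivalent to $A(t) G A(t)^{-1} = G$, i.e.\ $A(t) \in N(G)$. The same computation, applied to every homogeneous $\Psi$, gives (3)$\Leftrightarrow$(4).

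\emph{Step 3 (the degree is well defined).} By Step 1, on each $\pi^{-1}(U)$ carrying a homogeneous section the class $\alpha_U \colon \mathbb{R}^\ast \to N(G)/G$ does not depend on the section chosen there. If two such sets overlap over $U \cap V \neq \emptyset$, restrict both sections to the overlap; Step 1 then gives $\alpha_U = \alpha_V$. Thus $x \mapsto \alpha$ is locally constant on $M$, and connectedness of $M$ makes it constant. This is the only place where connectedness is used.

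I expect the main subtlety to be purely bookkeeping. One must check that $g$ is evaluated at $t\varepsilon$ on one side and at $\varepsilon$ on the other, and that only left cancellation of a $G$-factor is needed for the coset computation. This needs $A \in N(G)$ rather than just $A \in \mathrm{GL}(n,\mathbb{R})$, and that is exactly why the normalizer appears in the statement.
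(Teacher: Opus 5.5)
Your transformation formula in Step 1, the coset argument, the computation in Step 2, and the local-constancy argument in Step 3 are all correct. The gap is in what Step 2 actually establishes.

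In (2) and (4), $h^{A_{\Psi_0}}$ is an action on all of $\mathrm{Fr}(\widetilde L)$, and ``$P$ is preserved'' means $h^{A_{\Psi_0}}_t(P)\subseteq P$ over all of $\widetilde L$. It does not mean only over the domain $\pi^{-1}(U)$ of $\Psi_0$. Your computation only treats frames $\Psi_0(\varepsilon)g$ with $\varepsilon\in\pi^{-1}(U)$. That is enough for (2)$\Rightarrow$(1) and (4)$\Rightarrow$(3). For (1)$\Rightarrow$(2) and (3)$\Rightarrow$(4), however, it only gives invariance of $P|_{\pi^{-1}(U)}$.

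At a point $\varepsilon$ outside $\pi^{-1}(U)$ you have to express frames through another local homogeneous section $\Psi_1$. Writing $A_i=A_{\Psi_i}$, you get
\[
h^{A_0}_t\big(\Psi_1(\varepsilon)\,g\big)=\Psi_1(t\varepsilon)\,\big(A_1(t)\,g\,A_1(t)^{-1}\big)\big(A_1(t)\,A_0(t)^{-1}\big).
\]
This lies in $P$ for all $g\in G$ if and only if $A_1(t)A_0(t)^{-1}\in G$, i.e.\ if and only if $A_0$ and $A_1$ induce the same map $\mathbb R^\ast\to N(G)/G$. That is exactly the content of your Step 3. So, contrary to your closing remark, connectedness of $M$ is needed for the equivalences themselves, not only for the degree to be well defined.

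Concretely, take $G=\mathrm{Sp}(k,\mathbb R)$ and $M=M_1\sqcup M_2$. Put the symplectization of a contact structure (degree $\mathrm{id}$) over $M_1$ and a degree-$1$ (cosymplectic-type) structure over $M_2$. Then (1) and (3) hold, but no $h^{A_{\Psi_0}}$ preserves all of $P$.

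The repair uses only what you already have. First prove (1)$\Leftrightarrow$(3) and the constancy of the degree class. Then use the displayed identity to get global invariance in (1)$\Rightarrow$(2) and (3)$\Rightarrow$(4). The converses (2)$\Rightarrow$(1) and (4)$\Rightarrow$(3) follow from your local computation as written.

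If ``preserved'' were read only over the domain of $\Psi_0$, your proof would be complete as it stands. The phrasing of the statement, however, points to the global reading: it speaks of an action on $\mathrm{Fr}(\widetilde L)$ and assumes connectedness before listing the conditions. The paper gives no proof of this statement, so there is no alternative route to compare with.
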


So, given a Lie subgroup $G \subseteq  \mathrm{GL} (n, \mathbb R)$, in order to discuss homogeneous $G$-structures,
we need to classify the possible degrees first, computing the normalizer $N(G)$ and the
homomorphisms $\alpha : \mathbb{R}^\ast \to N(G)/G$. We conclude this section illustrating this with
three examples.

\begin{example} \label{ex:2.18}
Let $G =  \mathrm{Sp} (k, \mathbb R)$ be the symplectic group. In this case, there is a canonical
isomorphism $N(G)/G = N( \mathrm{Sp} (k, \mathbb R))/\mathrm{Sp} (k, \mathbb R) \cong \mathbb{R}^\ast$, and there are two main homomorphisms
$\alpha : \mathbb{R}^\ast \to N(\mathrm{Sp} (k, \mathbb R))/\mathrm{Sp} (k, \mathbb R) \cong \mathbb{R}^\ast$, namely the identity homomorphism $\operatorname{id}$ and the trivial
(constant) homomorphism $1$. One can show that homogeneous manifolds with a homogeneous $\mathrm{Sp} (k, \mathbb R)$-structure $P$ of degree $\alpha = \operatorname{id}$ are equivalent to manifolds $M$ with a pair
$(H, \Upsilon)$ where $H$ is a hyperplane distribution on $M$, and $\Upsilon$ is an $L = TM/H$-valued
$2$-form on $M$ such that $\kappa_H - \Upsilon$ is non-degenerate \cite[Section 5]{Tor2020}. Additionally $P$ is an integrable
$G$-structure if and only if $\Upsilon = 0$, hence $H$ is a contact structure.
When $\alpha = 1$ we recover a class of geometric structures called (\emph{almost}) \emph{cosymplectic
structures} \cite[Section 6.1]{Tor2020} (see, e.g., \cite{Cap2013} for a survey of Cosymplectic Geometry). Notice that Cosymplectic Geometry is an odd dimensional analogue of Symplectic Geometry as well and, actually, there exists a Symplectic-to-Cosymplectic Dictionary very similar to the Symplectic-to-Contact dictionary discussed in this note.
\end{example}

\begin{example} \label{ex:2.19}
Let $G = \mathrm{GL} (k, \mathbb C) \subseteq \mathrm{GL} (2k, \mathbb R)$ be the complex general linear group
(seen as a Lie subgroup of the real general linear group). In this case, there is a
canonical isomorphism $N(G)/G = N(\mathrm{GL} (k, \mathbb C))/\mathrm{GL} (k, \mathbb C) \cong \mathbb{Z}_2$. Consider the constant homomorphism $0 : \mathbb{R}^\ast \to \mathbb{Z}_2$. One can show that homogeneous manifolds with a homogeneous $\mathrm{GL} (k, \mathbb C)$-structure $P$ of degree $\alpha = 0$ are equivalent to almost
complex Atiyah bundles $(M, L, K)$ \cite[Section 6.2]{Tor2020}. Additionally $P$ is an integrable $G$-structure if and only if $K$ is a complex structure.
\end{example}

\begin{example} \label{ex:2.20}
Let $G = \mathrm{O} (n, \mathbb R)$ be the orthogonal group. In this case, there is a canonical
isomorphism $N(G)/G = N(\mathrm{O} (n, \mathbb R))/\mathrm{O} (n, \mathbb R) \cong \mathbb{R}^\ast_+$, the multiplicative group of positive reals. Consider the homomorphism $\alpha:\mathbb{R}^\ast \to \mathbb{R}^\ast, r \mapsto |r|^{1/2}$. One can show that homogeneous manifolds with a homogeneous $\mathrm{O} (n, \mathbb R)$-structure $P$ of degree $\alpha$ are equivalent to manifolds $M$ equipped with a Riemannian metric $g$ and a 1-form $\eta \in \Omega^1(M)$ \cite[Section 6.3]{Tor2020}. Additionally $P$ is an integrable $G$-structure if and only if $g, \eta$ satisfy an appropriate system of PDEs. When $\eta = 0$ this system boils down to $g$ having
constant sectional curvature equal to $1/4$.
\end{example}

\end{document}